\theoremstyle{plain} 
\newtheorem{Lemma}{Lemma}[section] \newtheorem{Thm}[Lemma]{Theorem} \newtheorem{Prop}[Lemma]{Proposition}\newtheorem{Cor}[Lemma]{Corollary}  
\theoremstyle{definition} \newtheorem{Defn}[Lemma]{Definition} \newtheorem{Ex}[Lemma]{Example} 
\theoremstyle{remark} \newtheorem{Rem}[Lemma]{Remark}  
\numberwithin{equation}{section}
\newcommand{\ZZ}{\mathbb{Z}}
\newcommand{\NN}{\mathbb{N}}
\newcommand{\dd}{\partial}
\newcommand{\inj}{\hookrightarrow}
\newcommand{\sse}{\subset}
\newcommand{\ess}{\supset}
\newcommand{\p}{\mathfrak{p}}
\begin{document}

\author{Ketil Tveiten}
\title{Two-sided ideals in the ring of differential operators on a Stanley-Reisner ring}
\address{Ketil Tveiten\\Matematiska Institutionen\\ Stockholms Universitet\\ 106 91 Stockholm.}
\email{ktveiten@math.su.se}
\subjclass[2010]{16S32, 13N10, 13F55}
\keywords{Rings of differential operators, Stanley-Reisner rings}
\begin{abstract}
Let $R$ be a Stanley-Reisner ring (that is, a reduced monomial ring) with coefficients in a domain $k$, and $K$ its associated simplicial complex. Also let $D_k(R)$ be the ring of $k$-linear differential operators on $R$. We give two different descriptions of the two-sided ideal structure of $D_k(R)$ as being in bijection with certain well-known subcomplexes of $K$; one based on explicit computation in the Weyl algebra, valid in any characteristic, and one valid in characteristic $p$ based on the Frobenius splitting of $R$. A result of Traves \cite{Traves} on the $D_k(R)$-module structure of $R$ is also given a new proof and different interpretation using these techniques.
\end{abstract}

\maketitle

%
\section{Introduction}\label{Sec:Introduction}
%

Rings of $k$-linear differential operators $D_k(R)$ on a $k$-algebra $R$ are generally difficult to study, even when the base ring $R$ is well-behaved. Some descriptions of $D_k(R)$ are given in e.g. \cite{Musson} for the case of toric varieties, \cite{Bavula-0} and \cite{Bavula-p} for general smooth affine varieties (in zero and prime characteristic respectively), and \cite{Traves}, \cite{Tripp} and \cite{Eriksson} for Stanley-Reisner rings. 
 Some criteria for simplicity of $D_k(R)$ exist (see \cite{Smith-vdB} and \cite{Saito} among others), and the study of their left and right ideals, through the theory of $D$-modules, is well developed. 

When $D_k(R)$ is not simple, however, it is an interesting problem to give a description of its \emph{two-sided} ideals; the purpose of this paper is to do this for the case of Stanley-Reisner rings. Every Stanley-Reisner ring is the face ring $R_K$ of a simplicial complex $K$, and we will give two different descriptions of the two-sided ideal structure of $R$ in terms of the combinatorial structure of $K$; namely the lattice of ideals is in a certain sense determined by the poset of subcomplexes of $K$ that are \emph{stars} of some face of $K$. The first description is based on explicit computations with monomials in the Weyl algebra, and the second (valid only in prime characteristic) takes advantage of the Frobenius splitting of $R$. 

%
\section{Some preliminaries}\label{Sec:prelims}
%

Let us fix some notation. Throughout, $k$ is a commutative domain. $K$ will denote an abstract simplicial complex on vertices $x_1,\ldots,x_n$; we will not distinguish between $K$ as an abstract simplicial complex and its topological realization. In the corresponding face rings (see \ref{Defn:SR-ring}) the indeterminate corresponding to a vertex $x_i$ will also be named $x_i$ to avoid notational clutter. 
Elements of $K$ will be referred to as \emph{simplices} or \emph{faces}. For a face $\sigma\in K$, we let $x_\sigma:=\prod_{x_i\in\sigma}x_i$.
$R$ will always mean a face ring $R_K$ for a simplicial complex $K$. We use standard multiindex notation: $x^a$ denotes $x_1^{a_1}\cdots x_n^{a_n}$, and $|a|=a_1+\cdots a_n$.

We briefly recall for the benefit of the reader some basics of Stanley-Reisner rings, omitting the proofs.

\begin{Defn}\label{Defn:SR-ring}
  Let $K$ be an abstract simplicial complex on vertices $x_1,\ldots,x_n$. The \emph{Stanley-Reisner ring}, or \emph{face ring}, of $K$ with coefficients in $k$ is the ring $R_K=k[x_1,\ldots,x_n]/I_K$, where $I_K=\langle x_{i_1}\cdots x_{i_r}|\{x_{i_1},\ldots,x_{i_r}\}\not\in K\rangle$ is the ideal of square-free monomials corresponding to the non-faces of $K$, called the \emph{face ideal of $K$}. 
\end{Defn}

Geometrically, $R_K$ is the coordinate ring of the cone on $K$, so $\dim R_K=\dim K+1$. Accordingly, when we talk about support of elements, we will refer to faces of $K$ when strictly speaking we mean the cones on these faces. If $K=\Delta_n$ is a simplex, $I_K$ is the zero ideal, and $R_K$ is the polynomial ring in $n$ variables. If $K=K'\ast K''$ is the simplicial join of complexes $K'$ and $K''$, then $R_K\simeq R_{K'}\otimes_k R_{K''}$. Face rings are exactly the reduced monomial rings, i.e. quotients of polynomial rings by square-free monomial ideals. 

Given a simplicial complex $K$, we will have use for a well-known class of subsets of $K$:

\begin{Defn}\label{Defn:star}
  Let $\sigma\in K$ be a face. The \emph{closed star} of $\sigma$ in $K$ is the subcomplex 
\[
st(\sigma,K):=\{\tau\in K|\tau\cup\sigma\in K\}.
\]
The \emph{open star} of $\sigma$ in $K$ is the set 
\[
st(\sigma,K)^\circ:=\{\tau\in K|\sigma\cup\tau\in K \wedge\sigma\cap \tau\neq \varnothing\};
\]
$st(\sigma,K)^\circ$ is the interior of $st(\sigma,K)$ in $K$, and $st(\sigma,K)$ is the closure of $st(\sigma,K)^\circ$ in $K$. 
The \emph{open complement} of $st(\sigma,K)$ is the set (not usually a subcomplex) 
\[
U_\sigma(K)=K\setminus st(\sigma,K) =\{\tau\in K| \tau\cup\sigma\not\in K\}.
\]
\end{Defn}

Stars are important because the support of a principal monomial ideal of $R_K$, considered as an $R_K$-module, is exactly equal to the open star of some face, and the closed star is the smallest subcomplex containing it. For the remainder, we will take \emph{star} to mean \emph{closed star}.
We will not have much need of comparing stars associated to different subcomplexes and so will often write simply $st(\sigma),U_\sigma$ if no confusion is likely to result. For completeness, we repeat a few simple facts:

\begin{Lemma}\label{Lemma:stars}
  \begin{itemize}
  \item[(i)] If $\sigma\sse\tau$ are faces in $K$, $st(\sigma,K)\ess st(\tau,K)$;
  \item[(ii)] If $L\sse K$ is a subcomplex containing $\sigma$, $st(\sigma,L)\sse st(\sigma,K)$;
  \item[(iii)] For a face $\sigma=\tau\cup\{x\}$, $st(\sigma,K)=st(x,st(\tau,K))$.
  \item[(iv)] $st(\tau)\sse st(\sigma)$ if and only if $\{\text{maximal simplices in }K\text{ that contain }\tau\}$\\  $\sse\{\text{maximal simplices in }K\text{ that contain }\sigma\}$.
\item[(v)] $\sigma\in st(\tau)\Leftrightarrow \tau\in st(\sigma)$.
\item[(vi)] If $\sigma\cup\tau$ is a face of $K$, $st(\sigma)^\circ\cap st(\tau)^\circ=st(\sigma\cup\tau)^\circ$.
  \end{itemize}
\end{Lemma}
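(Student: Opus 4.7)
The plan is to verify all six parts by directly unpacking the definitions of closed and open stars, using two simple tools: that a simplicial complex is closed under taking subsets, and that every face of $K$ extends to at least one maximal simplex. The arguments are essentially set-theoretic and should not require any machinery from the rest of the paper.

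For (i)--(iii) and (v) the calculations are immediate. In (i), if $\rho\in st(\tau,K)$ then $\rho\cup\tau\in K$, so the subset $\rho\cup\sigma\sseq\rho\cup\tau$ also lies in $K$ by downward closure. Part (ii) is instant from $L\sseq K$. For (iii), $\rho\in st(x,st(\tau,K))$ unfolds to the conjunction $\rho\cup\tau\in K$ and $\rho\cup\{x\}\cup\tau\in K$; the second conjunct implies the first by downward closure, and is exactly the condition $\rho\cup\sigma\in K$. Part (v) is just the symmetry of the relation $\sigma\cup\tau\in K$.

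For (iv) the key extra ingredient is that any face of $K$ lies in at least one maximal simplex. In the forward direction, given $st(\tau)\sseq st(\sigma)$ and a maximal $M\esseq\tau$, we have $M\in st(\tau)\sseq st(\sigma)$, so $M\cup\sigma\in K$; maximality of $M$ then forces $\sigma\sseq M$. Conversely, assume the condition on maximal simplices; for $\rho\in st(\tau)$, extend $\rho\cup\tau$ to some maximal simplex $M$, and use that $\tau\sseq M$ implies $\sigma\sseq M$, whence $\rho\cup\sigma\sseq M\in K$ shows $\rho\in st(\sigma)$.

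The most delicate part is (vi), since it is the only one involving the open star and the intersection condition $\sigma\cap\tau\neq\varnothing$. The strategy is to unfold both sides: membership of $\rho$ in the left-hand intersection means $\rho\cup\sigma,\rho\cup\tau\in K$ together with $\rho\cap\sigma\neq\varnothing$ and $\rho\cap\tau\neq\varnothing$, while membership in the right-hand side means $\rho\cup(\sigma\cup\tau)\in K$ together with $\rho\cap(\sigma\cup\tau)\neq\varnothing$. The inclusion ``$\supseteq$'' uses downward closure to split $\rho\cup\sigma\cup\tau\in K$ into $\rho\cup\sigma\in K$ and $\rho\cup\tau\in K$ and then leverages the hypothesis $\sigma\cup\tau\in K$ to transfer a common vertex; the inclusion ``$\sseq$'' combines the two union conditions with the fact that $\sigma\cup\tau$ is a face to rebuild $\rho\cup\sigma\cup\tau\in K$, while $(\rho\cap\sigma)\cup(\rho\cap\tau)\sseq\rho\cap(\sigma\cup\tau)$ handles the intersection clause. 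This is the step I expect to require the most care, as it is the only place where the open-star condition interacts non-trivially with the hypothesis on $\sigma\cup\tau$.
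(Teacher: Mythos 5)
Parts (i)--(v) are fine and essentially the argument the paper has in mind: the paper simply declares (i), (ii), (v) obvious, and your arguments for (iii) and (iv) are the same unwinding the paper does (for (iii) the paper writes a small chain of equalities invoking (i) at the end; your version using downward closure directly is cleaner).

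Part (vi) is where there is a genuine gap, and you correctly flagged it as the delicate one. Neither of your two inclusions actually goes through under the combinatorial formula $st(\sigma)^\circ=\{\rho\in K:\rho\cup\sigma\in K\ \wedge\ \rho\cap\sigma\neq\varnothing\}$ stated in Definition~\ref{Defn:star}. For ``$\sseq$'': knowing $\rho\cup\sigma\in K$, $\rho\cup\tau\in K$ and $\sigma\cup\tau\in K$ does \emph{not} let you ``rebuild'' $\rho\cup\sigma\cup\tau\in K$. Take $K$ on vertices $\{1,2,3,4\}$ with maximal faces $\{1,2,3\},\{2,3,4\},\{1,2,4\}$, and set $\sigma=\{1,2\}$, $\tau=\{2,3\}$, $\rho=\{2,4\}$; then all three pairwise unions and both intersections with $\rho$ are nonempty faces, but $\{1,2,3,4\}\notin K$, so $\rho\in st(\sigma)^\circ\cap st(\tau)^\circ$ while $\rho\notin st(\sigma\cup\tau)^\circ$. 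For ``$\esseq$'': from $\rho\cap(\sigma\cup\tau)\neq\varnothing$ you only get $\rho\cap\sigma\neq\varnothing$ \emph{or} $\rho\cap\tau\neq\varnothing$, not both; in the full $2$-simplex with $\sigma=\{1\}$, $\tau=\{2\}$, $\rho=\{1\}$, the right-hand side contains $\rho$ but $\rho\cap\tau=\varnothing$. So the identity is actually false under that literal reading of the definition, and your sketched ``transfer a common vertex'' step cannot be made rigorous. The paper's own proof sidesteps this by working with the \emph{topological} open star, i.e.\ the interior of $|st(\sigma)|$ inside $|K|$: there $\operatorname{int}(A)\cap\operatorname{int}(B)=\operatorname{int}(A\cap B)$ is automatic, and one only has to identify both sides with the interior of the union of maximal simplices containing $\sigma\cup\tau$. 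Equivalently, if one uses the standard combinatorial open star $\{\rho\in K:\sigma\sseq\rho\}$ (which the displayed formula agrees with only when $\sigma$ is a vertex), part (vi) becomes the triviality that $\sigma\sseq\rho$ and $\tau\sseq\rho$ iff $\sigma\cup\tau\sseq\rho$. Your proposal should either switch to one of those two readings or acknowledge that the boxed formula cannot support the claim.
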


\begin{proof}
  $(i)$, $(ii)$ and $(v)$ are obvious. $(iv)$ follows from the fact that a complex is determined by its maximal cells.
$(iii)$ follows from unwrapping the definitions: 
\begin{align}
st(x,st(\tau,K)) &= \{\alpha\in st(\tau,K)|\alpha\cup \{x\} \in st(\tau,K)\}\\
&= \{\alpha\in st(\tau,K)|\alpha\cup \{x\}\cup \tau \in K\}\\
&= \{\alpha\in st(\tau,K)|\alpha\cup \sigma \in K\}\\
&= st(\tau,K)\cap st(\sigma,K)\\
&= st(\sigma,K)
\end{align}
where the last equality follows from (i).
To show $(vi)$, note that for any $\sigma\in K$, $st(\sigma)^\circ$ is the interior of the union of maximal simplices containing $\sigma$. It follows that $st(\sigma\cup\tau)^\circ$ is the interior of the union of maximal simplices containing both $\sigma$ and $\tau$, in other words the maximal simplices in $st(\sigma)\cap st(\tau)$. 
\end{proof}

We will need some properties of the face ideals $I_{st(\sigma)}$ and face rings $R_{st(\sigma)}$ of the subcomplexes $st(\sigma,K)$.

\begin{Lemma}\label{Lemma:face-ideals}
  \begin{enumerate}
  \item If $K_1,K_2$ are subcomplexes of $K$, $I_{K_1}+I_{K_2}=I_{K_1\cap K_2}$ and $I_{K_1}\cap I_{K_2}=I_{K_1\cup K_2}$.
  \item $I_{st(\sigma)} = \langle x_\tau|\tau \in U_\sigma\rangle$.
  \item The minimal primes of $I_K$ are the face ideals $I_{st(\tau)}$ for the maximal simplices $\tau$.
  \end{enumerate}
\end{Lemma}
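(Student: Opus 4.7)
The plan is to reduce everything to monomial bookkeeping via a single basic observation: for any subcomplex $L\sseq K$ on the same vertex set, a monomial $x^a\in k[x_1,\ldots,x_n]$ lies in $I_L$ if and only if $\supp(a)\notin L$. This holds because if $\supp(a)\in L$ then every subset of $\supp(a)$ is a face of $L$, so no squarefree generator $x_\tau$ (with $\tau\notin L$) can divide $x^a$; conversely, if $\supp(a)\notin L$ then $x_{\supp(a)}$ is itself a generator of $I_L$. I would establish this first and then feed it through each of the three parts.

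Given the observation, (1) reduces to Boolean logic on supports. Since $I_{K_1}+I_{K_2}$ and $I_{K_1}\cap I_{K_2}$ are monomial ideals, it suffices to check containment monomial by monomial: $x^a\in I_{K_1}+I_{K_2}$ iff $\supp(a)\notin K_1$ or $\supp(a)\notin K_2$, equivalently iff $\supp(a)\notin K_1\cap K_2$; and $x^a\in I_{K_1}\cap I_{K_2}$ iff $\supp(a)$ is a non-face of both $K_1$ and $K_2$, equivalently iff $\supp(a)\notin K_1\cup K_2$. Part (2) is then immediate, since by definition $I_{st(\sigma)}$ is generated by $x_\tau$ for $\tau\notin st(\sigma)$, i.e.\ $\tau\in U_\sigma$.

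For (3), I would first observe that when $\tau$ is a maximal face, $st(\tau,K)$ is precisely the full simplex $\Delta^\tau$ on the vertices of $\tau$: any $\alpha\in st(\tau)$ satisfies $\alpha\cup\tau\in K$, and maximality of $\tau$ forces $\alpha\cup\tau=\tau$, so $\alpha\sseq\tau$. Combined with (2), this identifies $I_{st(\tau)}$ with $\langle x_i\mid x_i\notin\tau\rangle$, which is prime since the quotient is a polynomial ring in the remaining variables. Distinct facets yield pairwise incomparable such primes, because the inclusion $I_{st(\tau_1)}\sseq I_{st(\tau_2)}$ translates directly to $\tau_2\sseq\tau_1$, which is impossible for distinct maximal faces.

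To finish, I would note that $K=\bigcup_\tau st(\tau,K)$ as $\tau$ ranges over facets, since every face of $K$ extends to some facet and thus belongs to its star. Iterating (1) then gives $I_K=\bigcap_\tau I_{st(\tau)}$, an irredundant intersection of pairwise incomparable primes, which must therefore be exactly the minimal primes of $I_K$. There is no real obstacle in this argument; the content is entirely combinatorial, and the proof amounts to unwinding the definition of a face ideal through the support of a monomial.
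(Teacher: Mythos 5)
Your proposal is correct and follows essentially the same route as the paper: parts (1) and (2) unwind the definition of a face ideal (you do so via the support criterion for monomial membership, which the paper leaves implicit), and part (3) identifies $I_{st(\tau)}$ for a facet $\tau$ with the prime $\langle x_i \mid x_i\notin\tau\rangle$ and writes $I_K$ as the irredundant intersection $\bigcap_\tau I_{st(\tau)}$ over facets. Your explicit pairwise-incomparability argument is a clean way to make precise what the paper compresses into the remark that the $I_{st(\sigma)}$ are radical.
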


\begin{proof}
  The first two items follow from the definition of $I_{st(\sigma)}$.
For the last item, observe that $I_{st(\sigma)}$ is clearly prime when $\sigma$ is a maximal simplex, as $I_{st(\sigma)}=\langle x_i|x_i\in U_\sigma\rangle$ and monomial ideals are prime exactly when they are generated by a subset of the variables; observe also that all $I_{st(\sigma)}$ are radical. These observations together with item 1 give the result, as $I_K=\bigcap_{\sigma\sse K\text{ maximal}}I_{st(\sigma)}$.
\end{proof}

We intend to study the ring of differential operators on $R$, so let us define what that is:

\begin{Defn}\label{Defn:diffops}
  The ring $D_k(R)$ of $k$-linear differential operators on a $k$-algebra $R$ is defined inductively by
\[
D_k(R)=\bigcup_{n\ge 0}D_k^n(R)
\]
where $D_k^0(R)=R$ and for $n>0$, $D_k^n(R):=\{\phi\in End_k(R)|\forall r\in R: [\phi,r]\in D_k^{n-1}(R)\}$. Elements of $D_k^n(R)\setminus D_k^{n-1}(R)$ are said to have \emph{order $n$}, and there is a natural filtration 
\[
D_k^0(R) \sse D_k^1(R) \sse D_k^2(R) \sse \cdots
\]
on $D_k(R)$ called the \emph{order filtration}.
\end{Defn}

\begin{Defn}\label{Defn:Weyl-alg}
  The \emph{Weyl algebra} in $n$ variables over $k$ is the ring of differential operators on the polynomial ring $k[x_1,\ldots,x_n]$. It is generated as an $R$-algebra by the \emph{divided power operators} $\dd_i^{(a)}=\frac{1}{a!} \frac{\dd^a}{\dd x_i^a}$, 
with the relations $[x_i,x_j]=[\dd_i^{(a)},\dd_j^{(b)}]=0$ for $i\neq j$, $\dd_i^{(a)}\dd_i^{(b)}=\binom{a+b}{a}\dd_i^{(a+b)}$ and $[\dd_i^{(b)},x_i]=\dd_i^{(b-1)}$ (in particular $[\dd_i,x_i]=1$).
\end{Defn}

\begin{Rem}\label{Rem:divided-power-ops}
We use the divided power operators rather than the usual vector fields $\frac{\dd}{\dd x_i}$ as the latter do not generate the whole ring of differential operators in the case of characteristic $p$; the divided power operators however always generate everything regardless of the characteristic, as they define differential operators on $\ZZ$ and so descend to any commutative ring. In characteristic zero, the derivations $\dd_i$ suffice to generate everything; in characteristic $p$ we need the full set of elements $\dd_i^{p^r}$ for $r\ge 0$, which suffice due to the relation $\dd_i^{(a)}\dd_i^{(b)}=\binom{a+b}{a}\dd_i^{(a+b)}$.
\end{Rem}

In the following, $k$ will always be fixed, so we will omit it from the notation and write simply $D(R)$. Elements of $k$ will be referred to as \emph{constants}. One easily verifies that an element $x^a\dd^{(b)}$ in the Weyl algebra has order $|b|$.

%
\section{The two-sided ideals of $D(R)$}\label{Section:ideals}
%

When $R=R_K$ is a face ring, there exist several descriptions of $D(R)$ in the literature, see \cite{Tripp}, \cite{Eriksson} and \cite{Traves}. We wish to give a description of the two-sided ideals of $D(R)$ in terms of the combinatorics of $K$; for our purposes, the following description due to Traves (\cite{Traves}) is the most convenient.

\begin{Thm}\label{Thm:Traves}
Let $k$ be a commutative domain, and $R = k[X]/J$ a reduced monomial ring. An element $x^a\dd^{(b)}=\prod_{i}x_i^{a_i}\dd_i^{(b_i)}$ of the Weyl algebra over $k$ is in $D(R)$ if and only if for each minimal prime $\p$ of $R$, we have either $x^a\in \p$ or $x^b \not\in \p$. $D(R)$ is generated as a k-algebra by these elements, and they form a free basis of $D(R)$ as a left $k$-module.
\end{Thm}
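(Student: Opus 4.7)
The plan is to use the standard isomorphism $D(R)\cong I(I_K)/(I_K\cdot D(k[X]))$, where $I(J)=\{P\in D(k[X])\mid P(J)\sseq J\}$ is the idealizer of $J$ in the Weyl algebra. Since $D(k[X])$ has the free $k$-basis $\{x^a\dd^{(b)}\}$ from Definition \ref{Defn:Weyl-alg} and $I_K$ is a monomial ideal (hence invariant under the $(k^\times)^n$-scaling action $x_i\mapsto t_ix_i$, $\dd_i\mapsto t_i^{-1}\dd_i$), the idealizer $I(I_K)$ is spanned by weight-homogeneous elements. A direct computation on the monomial generators of $I_K$, using that $k$ is a domain, then reduces the problem to determining which individual basis elements $x^a\dd^{(b)}$ preserve $I_K$.

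The key computation is $x^a\dd^{(b)}(x^c)=\binom{c}{b}\,x^{a+c-b}$ when $c\ge b$ componentwise (zero otherwise), with $\binom{c}{b}=\prod_i\binom{c_i}{b_i}$. Since $I_K$ is generated by the monomials $x^c$ with $\supp(c)\not\in K$, the idealizer condition becomes: for every $c\ge b$ with $\supp(c)\not\in K$ and $\binom{c}{b}\ne 0$ in $k$, also $\supp(a+c-b)\not\in K$. Using Lemma \ref{Lemma:face-ideals}(3), the minimal primes of $R$ are $I_{st(\sigma)}=\langle x_i\mid i\not\in\sigma\rangle$ for maximal faces $\sigma$, and the theorem's condition translates to: for each maximal $\sigma\in K$, either $\supp(a)\not\sseq\sigma$ or $\supp(b)\sseq\sigma$.

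For sufficiency I would argue by contrapositive: if $\supp(a+c-b)\in K$, it lies in some maximal $\sigma$, and since $a,c-b\ge 0$, both $\supp(a)\sseq\sigma$ and $\{i\mid c_i>b_i\}\sseq\sigma$. The hypothesis forces $\supp(b)\sseq\sigma$, and as $\supp(c)\sseq\supp(b)\cup\{i\mid c_i>b_i\}$, also $\supp(c)\sseq\sigma$, contradicting $\supp(c)\not\in K$. For necessity, I would pick a maximal $\sigma$ witnessing failure of the condition ($\supp(a)\sseq\sigma$ but $\supp(b)\not\sseq\sigma$), and set $c=b+c'$ with $\supp(c')=\sigma$ and integer coefficients $c'_i$ chosen so that $\binom{b_i+c'_i}{b_i}\ne 0$ in $k$. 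Then $\supp(c)=\supp(b)\cup\sigma\not\in K$ (as $\sigma$ is maximal and $\supp(b)\not\sseq\sigma$), while $\supp(a+c-b)=\supp(a)\cup\sigma\sseq\sigma\in K$, exhibiting a generator $x^c\in I_K$ sent outside $I_K$.

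The main obstacle is the non-vanishing of $\binom{c}{b}$ in positive characteristic, which is precisely why divided-power operators are essential: in characteristic $p>0$ one may take $c'_i=p^{r_i}$ with $p^{r_i}>b_i$, so by Lucas' theorem $\binom{b_i+p^{r_i}}{b_i}\equiv 1\pmod p$, hence non-zero in $k$. The basis and generation claims then follow formally. The condition-satisfying $x^a\dd^{(b)}$ form a $k$-basis of $I(I_K)$ (inherited from the Weyl algebra basis); $I_K\cdot D(k[X])$ has $k$-basis the $x^a\dd^{(b)}$ with $\supp(a)\not\in K$ (which trivially satisfy the condition, since then $x^a$ lies in every minimal prime); and passage to $D(R)=I(I_K)/I_K\cdot D(k[X])$ leaves as a free $k$-basis exactly the condition-satisfying $x^a\dd^{(b)}$ with $\supp(a)\in K$. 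Algebra generation is automatic since $I(I_K)$ is closed under composition.
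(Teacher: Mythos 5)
The paper does not prove this theorem; it is cited directly from Traves \cite{Traves}, so there is no in-paper argument to compare against. Judged on its own terms, your proposal uses the standard idealizer description $D(R)\cong I(I_K)/I_K\cdot D(k[X])$ and your treatment of a \emph{single} monomial operator $x^a\dd^{(b)}$ is correct: the translation of the minimal-prime condition into ``$\supp(a)\sseq\sigma\Rightarrow\supp(b)\sseq\sigma$ for every maximal $\sigma$'', the sufficiency argument via $\supp(c)\sseq\supp(b)\cup\supp(c-b)$, and the necessity argument with $c'_i=p^{r_i}$ and Lucas' theorem to keep $\binom{c}{b}$ nonzero in characteristic $p$, are all sound.

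The gap is in the step you describe as ``a direct computation\dots reduces the problem to determining which \emph{individual} basis elements $x^a\dd^{(b)}$ preserve $I_K$.'' The torus-weight decomposition does reduce you to a weight-homogeneous operator $P=\sum_b c_b\,x^{b+\alpha}\dd^{(b)}$; but within a fixed weight $\alpha$, \emph{every} term sends $x^c$ to a scalar multiple of the \emph{same} monomial $x^{c+\alpha}$, namely $P(x^c)=\bigl(\sum_{b\le c}c_b\binom{c}{b}\bigr)x^{c+\alpha}$. So $P$ preserving $I_K$ only says that $\sum_{b\le c}c_b\binom{c}{b}=0$ whenever $\supp(c)\notin K$ and $\supp(c+\alpha)\in K$; it does \emph{not} immediately say that each $c_b\binom{c}{b}$ vanishes, and hence does not immediately say that each surviving $x^{b+\alpha}\dd^{(b)}$ lies in $I(I_K)$. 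A priori, two ``bad'' terms of the same weight could cancel on every offending monomial $x^c$. Ruling this out is where the real work is: one has to show that the linear system $\sum_b c_b\binom{c}{b}=0$ over the relevant set of $c$'s has only the trivial solution supported on bad $b$'s, which amounts to a nondegeneracy statement for the (multi-index) Pascal matrix over $k$ (in characteristic $p$ this again goes through Lucas' theorem). Without this step, you have shown that the good $x^a\dd^{(b)}$ span a free $k$-submodule of $I(I_K)$ and that no bad one lies there individually, but not that they span all of $I(I_K)$ --- which is exactly the spanning half of Traves' basis claim.
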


\begin{Ex}\label{Ex:empty-2-simplex}
  Let $R=k[x_1,x_2,x_3]/(x_1x_2x_3)$. The associated simplicial complex $K$ is the boundary of a 2-simplex. Then by \ref{Thm:Traves}, $D(R)=R\langle x_i^{a_i}\dd_i^{(b_i)}|a_i,b_i\in \NN\rangle$. 
\end{Ex}

\begin{Ex}\label{Ex:three-1-simplices}
   Let $R=k[x_1,x_2,x_3,x_4]/I$ where $I=(x_1x_3,x_1x_4,x_2x_4)$. The associated complex $K$ is a chain of three 1-simplices, connected in order $x_1,x_2,x_3,x_4$. Theorem \ref{Thm:Traves} gives $D(R)=R\langle x_1^a\dd_1^{(b)},x_2^a\dd_2^{(b)},x_3^a\dd_3^{(b)},x_4^a\dd_4^{(b)},x_1^a\dd_2^{(b)},x_4^a\dd_3^{(b)}\rangle$ (for $a,b>0$). 
\end{Ex}

Note that in both examples, generators of the form $x_i^a\dd_i^{(b)}$ appear; it is not hard to see that such ``toric'' operators are always in $D(R)$. In \ref{Ex:three-1-simplices}, we also have generators of the form e.g. $x_i^a\dd_j^{(b)}$ (where $i\neq j$). To understand when this happens, we may give a somewhat more geometric formulation of \ref{Thm:Traves}: 

\begin{Prop}\label{Prop:xdy}
Let $K$ be a simplicial complex and $R=R_K$ its face ring. Also let $x^a=\prod x_i^{a_i}, x^b=\prod x_j^{b_j}$ be such that $supp(x^a)=st(\sigma)$ and $supp(x^b)=st(\tau)$, for some $\sigma,\tau\in K$. Then $x^a\dd^{(b)}=\prod_{i}x_i^{a_i}\dd_i^{(b_i)}$ is in $D(R)$ if and only if $st(\sigma)\sse st(\tau)$.
\end{Prop}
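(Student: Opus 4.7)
The natural route is to apply Theorem~\ref{Thm:Traves} (Traves) directly, combined with the description of minimal primes of $R$ from Lemma~\ref{Lemma:face-ideals}, and then recast the resulting algebraic membership conditions as combinatorial ones via Lemma~\ref{Lemma:stars}(iv).

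By Lemma~\ref{Lemma:face-ideals}(3), the minimal primes of $R$ are the ideals $\p_\mu := I_{st(\mu)}$ indexed by the maximal simplices $\mu$ of $K$; since $\mu$ is maximal, the description in Lemma~\ref{Lemma:face-ideals}(2) simplifies to $\p_\mu = \langle x_v \mid v \not\in \mu\rangle$. Consequently, for any monomial $x^c$, we have $x^c \in \p_\mu$ exactly when the face $\{x_i \mid c_i > 0\}$ is not contained in $\mu$.

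Under the hypothesis, $\{x_i \mid a_i > 0\}$ has closed star equal to $st(\sigma)$; by Lemma~\ref{Lemma:stars}(iv), it therefore lies in exactly the same maximal simplices of $K$ as $\sigma$ itself, so $x^a \in \p_\mu$ iff $\sigma \not\sseq \mu$. Similarly $x^b \in \p_\mu$ iff $\tau \not\sseq \mu$. Theorem~\ref{Thm:Traves} therefore translates to the statement that $x^a\dd^{(b)} \in D(R)$ iff for every maximal simplex $\mu$, $\sigma \sseq \mu$ implies $\tau \sseq \mu$, i.e.\ every maximal simplex containing $\sigma$ also contains $\tau$. By Lemma~\ref{Lemma:stars}(iv) this is exactly $st(\sigma) \sseq st(\tau)$.

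The proof is thus a short chain of equivalences and presents no real obstacle; the only care required is in the translation step, which relies on Lemma~\ref{Lemma:stars}(iv) to identify the maximal simplices containing $\{x_i \mid a_i > 0\}$ with those containing $\sigma$ (and likewise for $b$ and $\tau$), so that the algebraic condition on $\p_\mu$-membership corresponds cleanly to a containment condition of faces in $\mu$.
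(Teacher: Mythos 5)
Your proof is correct and follows essentially the same route as the paper's: identify the minimal primes of $R$ with the face ideals $I_{st(\mu)}$ of maximal simplices $\mu$ via Lemma~\ref{Lemma:face-ideals}, translate Traves' membership condition into a statement about which maximal simplices contain $\sigma$ and $\tau$, and conclude via the characterization of star containment. The only cosmetic difference is that you invoke Lemma~\ref{Lemma:stars}(iv) explicitly for the final translation, whereas the paper phrases the same step through inclusions of the sets $P_{x^a}$, $P_{\neg x^a}$ and the equivalence $st(\sigma)\sseq st(\tau)\Leftrightarrow I_{st(\sigma)}\esseq I_{st(\tau)}$.
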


\begin{proof}
  Let $P_{x^a}$ denote the set of minimal primes in $R$ that contain $x^a$, and $P_{\neg x^a}$ the set of minimal primes that does not contain $x^a$. Clearly, $P_{x^a}\cup P_{\neg x^a}$ is equal to the set of minimal primes in $R$; denote this by $P$. Recalling from \ref{Lemma:face-ideals} that the minimal primes of $R$ are the face ideals $I_{st(\alpha)}$ for maximal simplices $\alpha$, we can reformulate these definitions: $P_{x^a}$ is the set of ideals $I_{st(\alpha)}$ such that $\alpha$ is maximal and $x^a\in I_{st(\alpha)}$, in other words those ideals $I_{st(\alpha)}$ such that $\alpha$ is maximal and $\alpha\in U_\sigma$; and $P_{\not x^a}$ is the set of ideals $I_{st(\alpha)}$ with $\alpha$ maximal and contained in $st(\sigma)$. Again using \ref{Lemma:face-ideals}, the ideal $I_{st(\sigma)}$ defining $st(\sigma)$ is equal to the intersection of all ideals in $P_{\neg x^a}$. Unwrapping definitions, we get
  \begin{align*}
    st(\sigma)\sse st(\tau) & \Leftrightarrow I_{st(\sigma)}\ess I_{st(\tau)}\\
& \Leftrightarrow P_{\neg x^a}\ess P_{\neg x^b}\\
& \Leftrightarrow P_{x^a}\sse P_{x^b}.
  \end{align*}
Putting this together with \ref{Thm:Traves}, we have
  \begin{align*}
    x^a\dd^{(b)}\in D(R) & \Leftrightarrow \forall \p\in P: x^a\in \p \vee x^b\not\in\p\\
& \Leftrightarrow \forall \p\in P: \p\in P_{x^a}\vee \p\in P_{\neg x^b}\\
& \Leftrightarrow P = P_{x^a}\cup P_{\neg x^b}\\
& \Leftrightarrow P_{x^a}\sse P_{x^b} \vee P_{\neg x^b}\sse P_{\neg x^a}\text{ (and these are equivalent)}\\
& \Leftrightarrow st(\sigma)\sse st(\tau).
  \end{align*}
\end{proof}

\begin{Ex}\label{Ex:three-2-simplices}
  Let $R=k[x_1,x_2,x_3,x_4,x_5]/(x_1x_3,x_1x_4,x_2x_4)$, the associated $K$ is three 2-simplices $\{x_1,x_2,x_5\},\{x_2,x_3,x_5\},\{x_3,x_4,x_5\}$ glued along the edges $\{x_2,x_5\}$ and $\{x_3,x_5\}$; $x_5$ is a common vertex to all faces. Note that this makes $K$ a simplicial join of $\{x_5\}$ with the complex from Example \ref{Ex:three-1-simplices}. Looking at the closed stars of the faces, we see that 
\[
st(x_1)\sse st(x_2) \sse st(x_5) \ess st(x_3) \ess st(x_4).
\]
As $st(x_1)=st(\{x_1,x_2\})$, $st(x_4)=st(\{x_4,x_3\})$ and for any face $\sigma$, $st(\sigma)=st(\sigma\cup x_5)$ this accounts for all the stars. From this we should by \ref{Prop:xdy} have the ``toric'' generators $x_i^a\dd_i^{(b)}$, and also $x_1^a\dd_2^{(b)},x_1^a\dd_5^{(b)},x_1^a\dd_2^{(b)}\dd_5^{(c)},x_2^a\dd_5^{(b)}$ and the same with $x_1$ and $x_2$ replaced by $x_4$ and $x_3$ respectively (by symmetry). In fact, $st(x_5)=st(\varnothing)=K$, so we should also have $\dd_5^{(a)}=1\cdot\dd_5^{(a)}$ and the description is somewhat redundant. 
\end{Ex}

From \ref{Prop:xdy} we deduce the following very useful criterion.

\begin{Cor}\label{Cor:ideal-containment}
  $\langle x_\tau\rangle \sse \langle x_\sigma\rangle$ if and only if $st(\tau)\sse st(\sigma)$.
\end{Cor}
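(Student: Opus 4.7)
The plan is to prove both directions using Proposition~\ref{Prop:xdy} together with the basis description from Theorem~\ref{Thm:Traves}. The forward direction exploits a $D(R)$-invariance property of minimal primes, while the reverse is an explicit Weyl algebra calculation built around the operator $x_\tau \partial^{(x_\sigma)}$.

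For the forward direction I would argue by contrapositive. Suppose $st(\tau)\not\sse st(\sigma)$: using Lemma~\ref{Lemma:stars}(iv) one produces a maximal simplex $\alpha$ with $\tau\sse\alpha$ but $\sigma\not\sse\alpha$, so $x_\tau\not\in\p_\alpha$ while $x_\sigma\in\p_\alpha$ (Lemma~\ref{Lemma:face-ideals}). The key structural fact is that $\p_\alpha$ is stable under the action of $D(R)$: for any basis element $x^a\dd^{(b)}\in D(R)$ and any generator $x_j$ of $\p_\alpha$ (so $x_j\not\in\alpha$), the nonzero values of $x^a\dd^{(b)}(x_j)$ involve either multiplication by $x_j$ (which stays in $\p_\alpha$) or a derivative taking $x_j\mapsto 1$; in the latter case, maximality of $\alpha$ forces $\alpha\not\in st(\mathrm{supp}(x^b))$, and Proposition~\ref{Prop:xdy} then forces $x^a\in\p_\alpha$. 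Hence $R/\p_\alpha$ is a $D(R)$-module; since $x_\sigma$ acts as zero on it, so does every element of $\langle x_\sigma\rangle$. But $x_\tau$ acts non-trivially (sending $1\mapsto x_\tau\ne 0$), so $x_\tau\not\in\langle x_\sigma\rangle$, contradicting $\langle x_\tau\rangle\sse\langle x_\sigma\rangle$.

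For the reverse direction, suppose $st(\tau)\sse st(\sigma)$. By Proposition~\ref{Prop:xdy}, the operator $A:=x_\tau\dd^{(x_\sigma)}=x_\tau\prod_{x_i\in\sigma}\dd_i$ lies in $D(R)$. A direct Weyl algebra computation using $\dd_i x_i=x_i\dd_i+1$ gives
\[
A\cdot x_\sigma \;=\; x_\tau\prod_{x_i\in\sigma}(x_i\dd_i+1) \;=\; x_\tau + \sum_{\varnothing\ne V\sseq\sigma} x_\tau x_V\dd^{(V)}.
\]
The left side lies in $\langle x_\sigma\rangle$, and the $V=\sigma$ summand equals $x_\sigma\cdot A$, also in $\langle x_\sigma\rangle$. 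For each intermediate $V$ with $\varnothing\subsetneq V\subsetneq\sigma$ one has $st(\tau)\sse st(\sigma)\sse st(V)$ by Lemma~\ref{Lemma:stars}(i), so Proposition~\ref{Prop:xdy} yields $x_\tau\dd^{(V)}\in D(R)$ and one can run the analogous computation on $x_\tau\dd^{(V)}\cdot x_\sigma$; a downward induction on $|V|$ then places each term $x_\tau x_V\dd^{(V)}$ into $\langle x_\sigma\rangle$, and subtracting gives $x_\tau\in\langle x_\sigma\rangle$.

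The main obstacle is the reverse implication: the inductive step requires carefully expressing each intermediate operator $x_\tau x_V\dd^{(V)}$ as a finite sum of elements of the form $PxQ$ with $P,Q\in D(R)$ and $x$ a scalar multiple of $x_\sigma$. This is a bookkeeping exercise in the Weyl algebra rather than a conceptual difficulty, but it is where the majority of the work sits; the forward direction, by contrast, reduces cleanly to a single module-theoretic observation once $D(R)$-stability of minimal primes is established.
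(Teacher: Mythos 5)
Your forward direction (that $\langle x_\tau\rangle\sse\langle x_\sigma\rangle$ forces $st(\tau)\sse st(\sigma)$) is correct and takes a genuinely different route from the paper. You establish that each minimal prime $\p_\alpha = I_{st(\alpha)}$ is a $D(R)$-submodule of $R$, so $\langle x_\sigma\rangle$ annihilates $R/\p_\alpha$ while $x_\tau$ does not; choosing a maximal $\alpha$ with $\tau\sse\alpha$ and $\sigma\not\sse\alpha$ via Lemma~\ref{Lemma:stars}(iv) then separates the ideals. The paper's proof of this implication instead asserts that $st(\tau)\not\sse st(\sigma)$ forces $\tau\in U_\sigma$, which is false in general (take $K$ the boundary of a $2$-simplex and $\tau,\sigma$ distinct vertices: $st(\tau)\not\sse st(\sigma)$ yet $\tau\cup\sigma\in K$), so your module-theoretic argument is in fact more robust than the printed one.

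The reverse direction does not close as written, and the problem is more than bookkeeping. You expand $A\cdot x_\sigma$ and propose to put each intermediate term $x_\tau x_V\dd^{(V)}$ (for $\es\ne V\subsetneq\sigma$) into $\langle x_\sigma\rangle$ by running the ``analogous computation'' on $x_\tau\dd^{(V)}\cdot x_\sigma$. But writing $W=\sigma\sm V$, that computation gives $x_\tau\dd^{(V)}\cdot x_\sigma = x_\tau x_W\sum_{U\sseq V}x_U\dd^{(U)}$, whose summands are $x_\tau x_{W\cup U}\dd^{(U)}$ with $U\sseq V$ --- not the terms $x_\tau x_V\dd^{(V)}$ your downward induction needs. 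Concretely, for $\sigma=\{x_1,x_2\}$ you must show $x_\tau x_1\dd_1\in\langle x_\sigma\rangle$, but $x_\tau\dd_1\cdot x_\sigma = x_\sigma\cdot(x_\tau\dd_1) + x_\tau x_2$, which only produces $x_\tau x_2\in\langle x_\sigma\rangle$, and no iterate of this scheme reaches $x_\tau x_1\dd_1$. The paper sidesteps this with a different device: the iterated commutator $[\cdots[x_\tau\dd_\sigma,x_{i_1}],\cdots,x_{i_r}]=x_\tau$ lies in $\bigcap_{x_i\in\sigma}\langle x_i\rangle$, since a two-sided ideal is stable under $[\,\cdot\,,x_i]$, and that intersection is then identified with $\langle x_\sigma\rangle$. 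To repair your version you would need to replace the induction --- for instance by arguing that the commutator scheme places the intermediate terms into $\bigcap_{x_i\in\sigma}\langle x_i\rangle$ --- rather than trying to land each $x_\tau x_V\dd^{(V)}$ directly in $\langle x_\sigma\rangle$.
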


\begin{proof}
If $st(\tau)\sse st(\sigma)$, it follows from \ref{Prop:xdy} that $x_\tau\dd_\sigma=x_\tau\prod_{i:x_i\in\sigma}\dd_i$ is in $D(R)$. Now observe that $[\cdots [x_\tau\dd_\sigma,x_{i_1}],\cdots,x_{i_r}]=x_\tau$ (where $x_\sigma=\prod_{1\le j\le r} x_{i_j}$), so we have $x_\tau\in\bigcap_{i:x_i\in\sigma}\langle x_i\rangle = \langle x_\sigma\rangle$. 

To show the reverse implication, note that by definition of $I_{st(\sigma)}$, we have $I_{st(\sigma)}\cap\langle x_\sigma\rangle=\langle 0 \rangle$. If now $st(\tau)\not\sse st(\sigma)$, it follows that $\tau\in U_\sigma$, so $x_\tau\in I_{st(\sigma)}$, which finally implies $x_\tau\not\in\langle x_\sigma\rangle$.
\end{proof}

The following very useful result is surprising. 

\begin{Thm}\label{Thm:monomial-ideals}
  Any proper two-sided ideal in $D(R)$ is generated by reduced monomials in the ``ordinary'' variables $x_1,\ldots,x_n$.
\end{Thm}

\begin{proof}
The proof is in three parts:
\begin{enumerate}
\item The ideal $\langle \sum_{(a,b)\in S} c_{ab}x^a\dd^{(b)}\rangle$ (for some index set $S\sse \NN^{2n}$) is equal to the ideal $\langle x^a\dd^{(b)}|(a,b)\in S\rangle$;
\item the ideal $\langle x^a \rangle$ is equal to the ideal $\langle \prod_{a_i\neq 0}x_i \rangle$;
\item the ideal $\langle x^a\dd^{(b)}\rangle$ is equal to the ideal $\langle \prod_{a_i\neq 0}x_i \rangle$.

\end{enumerate}
We will make heavy use of the fact that for any two-sided ideal $I$ and any element $\phi\in D(R)$, the set of commutators $[\phi,I]$ is contained in $I$.

For the first part, recall that we have two natural concepts of grading on the Weyl algebra, that descend to $D(R)$. First, the natural $\ZZ^n$-grading on the Weyl algebra given by the \emph{degree}
\[
deg(x^a\dd^{(b)})=(a_1-b_1,\ldots,a_n-b_n),
\]
which induces a grading on $D(R)$; second we have the $\NN^n$-grading given by the \emph{order}
\[
ord(x^a\dd^{(b)})=(b_1,\ldots,b_n).
\]
Note that 
\begin{align*}
  [x_i\dd_i,x^a\dd^{(b)}] =& x_i\dd_ix^a\dd^{(b)}-x^a\dd^{(b)}x_i\dd_i\\
=& x_i(x^a\dd_i+a_ix^{a-1_i})\dd^{(b)}-x^a(x_i\dd^{(b)}+\dd^{(b-1_i)})\dd_i\\
=& x_ix^a\dd_i\dd^{(b)} + a_ix_ix^{a-1_i}\dd^{(b)}-x^ax_i\dd^{(b)}\dd_i-x^a\dd^{(b-1_i)}\dd_i\\
=& a_ix^a\dd^{(b)}-\binom{b_i-1+1}{1}x^a\dd^{(b)}\\
=& (a_i-b_i)x^a\dd^{(b)}
\end{align*}
(in the remainder we omit the proof of such identities to avoid tedium), and in the case of characteristic $p$, if $a_i-b_i=cp^r$, we have $[x_i^{p^r}\dd_i^{(p^r)},x^a\dd^{(b)}]=cx^a\dd^{(b)}$. In other words, the operators $[x_i\dd_i,-]$ (and $[x_i^{p^r}\dd_i^{(p^r)},-]$) give different weight to each degree-graded component. Note also that 
\[
[x^a\dd^{(b)},x_i]\cdot\dd_i = x^a\dd^{(b-1_i)}\dd_i = b_ix^a\dd^{(b)},
\]
and if $b_i=cp^r$, we have $[x^a\dd^{(b)},x_i^{p^r}]\dd_i^{(p^r)}=cx^a\dd^{(b)}$. In other words the operators $[-,x_i]\dd_i$ (and $[-,x_i^{p^r}]\dd^{p^r}$) give different weight to each order-graded component. Putting these together, we can isolate any term $x^a\dd^{(b)}$ by applying a suitable polynomial in the operators $[x_i\dd_i,-]$, $[x_i^{p^r}\dd_i^{(p^r)},-]$, $[-,x_i]\dd_i$ and $[-,x_i^{p^r}]\dd^{p^r}$.


For the second part, we may reduce to a single variable. We separate the cases by characteristic. If $char(k)=p$, we have $[x_i\dd_i^{(p^r)},x_i^{p^r}]=x_i$, so $x_i$ is in the ideal generated by $x_i^{p^r}$; choosing a power of $p$ larger than $a_i$ we have $x_i^{p^r}=x_i^{a_i}\cdot x_i^{p^r-a_i}$ and so $x_i\in \langle x_i^{a_i}\rangle$. If $char(k)=0$, on the other hand, we have
\[
[x_i\dd_i^{(2)},x_i^{a_i}]=a_ix_i^{a_i}\dd_i+\binom{a}{2}x_i^{a_i-1}
\]
and 
\[
[x_i^2\dd_i^{(3)},x_i^{a_i}]=a_ix_i^{a_i+1}\dd_i^{(2)}+\binom{a_i}{2}x_i^{a_i}\dd_i+\binom{a_i}{3}x_i^{a_i-1}.
\]
If $a_i=0,1$ there is nothing to prove, and if $a_i>1$, we can invert $\frac{a_i-1}{2}\binom{a_i}{2}-\binom{a_i}{3} = \frac{1}{12}a_i(a_i^2-1)$ and get
\[
x_i^{a_i-1}=\frac{12}{a_i(a_i^2-1)}\left( \frac{a_i-1}{2}[x_i\dd_i^{(2)},x_i^{a_i}]-[x_i^2\dd_i^{(3)},x_i^{a_i}]+a_ix_i^{a_i}\cdot x_i\dd_i^{2}\right).
\]
This gives $\langle x_i^{a_i-1}\rangle \sse \langle x_i^{a_i}\rangle$ and by iterating this procedure, $\langle x_i\rangle = \langle x_i^{a_i}\rangle$.


For the third part, observe that $[x^n\dd^{(m)},x_j]=x^n\dd^{(m-1_j)}$ (for $j$ such that $m_j\neq 0$) is a valid identity for all $n,m>0$. Iterating this beginning with $n=a,m=b$ gives $\langle x^a\rangle\sse\langle x^a\dd^{(b)}\rangle$. By applying part 2 this becomes $\langle \prod_{a_i\neq 0}x_i \rangle \sse \langle x^a\dd^{(b)}\rangle$. 

To show the reverse implication $\langle x^a\dd^{(b)}\rangle \sse \langle \prod_{a_i\neq 0}x_i \rangle$ we show $\langle x^a\dd^{(b)}\rangle \sse \langle x_i\rangle$ for the two cases $a_i,b_i\neq 0$ and $a_i\neq 0,b_i=0$. For the first case, $x_i^{a_i}\dd_i^{(b_i)}$ is a factor of $x^a\dd^{(b)}$; and applying the above argument we have that $x_i^{a_i}\dd_i^{(b_i)}\in \langle x_i\rangle$; it follows that $x^a\dd^{(b)}\in \langle \prod_{i:a_i,b_i\neq 0}x_i\rangle$. 

For the second case, $a_i\neq 0,b_i=0$, we may assume $a_i=1$, for if $a_i>1$, then clearly $x^a\dd^{(b)}=x_ix^{a-1_i}\dd^{(b)}\in\langle x_i\rangle$. By the previous case, $x_i\dd_i^{(2)}$ is in $\langle x_i\rangle$, and so is $x^{a+1_i}\dd^{(b)}=x_ix^a\dd^{(b)}$; then of course their commutator
\[
[x^{a+1_i}\dd^{(b)},x_i\dd_i^{(2)}]=-(a_i+1)x^{a+1_i}\dd_i\dd^{(b)}-a_ix^a\dd^{(b)}
\]
is also in $\langle x_i\rangle$. Rewriting this (with $a_i=1$ as we have assumed) we get
\[
x^a\dd^{(b)} = [x^{a+1_i}\dd^{(b)},x_i\dd_i^{(2)}]-2x^{a+1_i}\dd_i\dd^{(b)}
\]
and so $x^a\dd^{(b)}\in \langle x_i\rangle$; it follows that $x^a\dd^{(b)}\in \langle \prod_{i:a_i\neq 0,b_i=0}x_i\rangle$. Taking both cases together we have shown that $x^a\dd^{(b)}\in \langle \prod_{i:a_i\neq 0,b_i=0\vee b_i\neq 0}x_i\rangle = \langle \prod_{i:a_i\neq 0}x_i\rangle$.
\end{proof}

We have shown that all ideals in $D(R)$ are generated by reduced monomials $\prod x_i$ in the variables of $R$; the next question is of course which ones? 
Recall that we will not distinguish between the vertices of the simplicial complex $K$ and the variables of the associated face ring $R$, but refer to either by the same name, e.g. $x_i$. We also remind of the notation $x_\sigma=\prod_{x_i\in\sigma}x_i$.

\begin{Thm}\label{Thm:generators}
  Any proper ideal in $D(R)$ is generated by monomials $x_\sigma$ with $\sigma\in K$ such that $st(\sigma)\neq K$.
\end{Thm}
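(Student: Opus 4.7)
The plan is to combine the two preceding results and do almost no new work. Theorem \ref{Thm:monomial-ideals} already tells me that any proper two-sided ideal $I\sseq D(R)$ is generated by reduced monomials $\prod_{i\in S} x_i$ in the variables of $R$. So my only task is to identify which such monomials can actually appear as generators of a proper ideal.

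First, I would observe that in the face ring $R_K$, the reduced monomial $\prod_{i\in S}x_i$ vanishes whenever $S$ is not a face of $K$, since then it lies in $I_K$ by definition. Thus every nonzero monomial generator is automatically of the form $x_\sigma$ for some $\sigma\in K$, and I may take the generating set of $I$ to consist of such elements.

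Next, I would rule out the case $st(\sigma)=K$ by invoking Corollary \ref{Cor:ideal-containment} with the empty face. The empty simplex $\varnothing\in K$ satisfies $st(\varnothing,K)=\{\tau\in K\mid \tau\cup\varnothing\in K\}=K$, and $x_\varnothing$ is the empty product $1$. Corollary \ref{Cor:ideal-containment} then gives
\[
\langle 1\rangle = \langle x_\varnothing\rangle \sseq \langle x_\sigma\rangle \iff st(\varnothing)=K\sseq st(\sigma) \iff st(\sigma)=K.
\]
Hence if some generator $x_\sigma$ of $I$ had $st(\sigma)=K$, we would get $1\in I$, contradicting the properness of $I$. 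Therefore every generator $x_\sigma$ must satisfy $st(\sigma)\neq K$, which is exactly the assertion.

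There is no real obstacle here; the only point worth checking is the bookkeeping around the empty face (that it is a face, that its star is all of $K$, and that $x_\varnothing=1$), after which the result falls out of Theorem \ref{Thm:monomial-ideals} and Corollary \ref{Cor:ideal-containment} with no further computation.
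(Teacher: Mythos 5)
Your proof is correct and follows essentially the same route as the paper: reduce to reduced monomials via Theorem~\ref{Thm:monomial-ideals}, discard non-face monomials as zero, and observe that $st(\sigma)=K$ forces $\langle x_\sigma\rangle=D(R)$. The only difference is cosmetic — you invoke Corollary~\ref{Cor:ideal-containment} with $\tau=\varnothing$ to conclude $1\in\langle x_\sigma\rangle$, whereas the paper re-runs the underlying commutator argument (bracketing $x_\sigma$ against the $\partial_i$, $x_i\in\sigma$) inline; these are the same computation, and your packaging of it is slightly cleaner.
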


\begin{proof}
  From \ref{Thm:monomial-ideals} it follows that any ideal in $D(R)$ is generated by reduced monomials in the variables $x_i$, and clearly the monomials corresponding to non-faces cannot occur as they are in $I_K$, so what remains are the monomials $x_\sigma$ for $\sigma\in K$. Only those $x_\sigma$ such that $st(\sigma)\neq K$ generate proper ideals, as otherwise we have $st(\sigma)=K$ and by \ref{Prop:xdy} the elements $1\cdot\dd_i$ where $x_i\in\sigma$ are in $D(R)$, as both $1$ and $\dd_i$ are monomials with support contained in $st(\sigma)=K$; if we write $\sigma=\{x_{i_1},\ldots,x_{i_t}\}$, we have $[\dd_{i_1},[\dd_{i_2},[\cdots,[\dd_{i_r},x_\sigma]\cdots]]]=1$ and so $\langle x_\sigma\rangle = \langle 1\rangle = R$.
\end{proof}

This now gives us all the ideals in $D(R)$, as by sums 
of principal ideals $\langle x_\sigma\rangle$ we can make everything. We may however also take a different approach:
Any two-sided ideal in $D(R)$ is the kernel of some ring homomorphism; the combinatorial structure of the associated simplicial complex $K$ gives rise to several such maps. 
An obvious choice for candidate homomorphisms is the localization at an element $x_\sigma$; we will see that the kernels of such maps is another generating set for the lattice of two-sided ideals in $D(R)$. We introduce the notation $\overline{J}$ for the extension to $D(R)$ of an ideal $J\sse R$.

\begin{Thm}\label{Thm:mainthm}
The kernel of the localization map $D(R)\to D(R)[\frac{1}{x_\sigma}]$ is the extension $\overline{I_{st(\sigma)}}$ of the ideal $I_{st(\sigma,K)}\sse R$ to $D(R)$.
\end{Thm}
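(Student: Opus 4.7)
The plan is to identify the kernel $\mathfrak{K} := \ker\bigl(D(R)\to D(R)[x_\sigma^{-1}]\bigr)$ by determining which of its monomial generators, provided by Theorem \ref{Thm:generators}, actually lie in it, and then matching this set against the description of $I_{st(\sigma)}$ from Lemma \ref{Lemma:face-ideals}(2). Since $x_\sigma\in R$ is not nilpotent (as $R$ is reduced), the localization is nonzero and $\mathfrak{K}$ is a proper two-sided ideal, so Theorem \ref{Thm:generators} gives a generating set consisting of monomials $x_\tau$ with $\tau\in K$. It therefore suffices to prove that $x_\tau\in\mathfrak{K}$ if and only if $\tau\in U_\sigma$.

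For the inclusion $\overline{I_{st(\sigma)}}\sseq \mathfrak{K}$: by Lemma \ref{Lemma:face-ideals}(2), $I_{st(\sigma)}$ is generated by $\{x_\tau : \tau\in U_\sigma\}$. For any such $\tau$ the support of $x_\tau x_\sigma$ is the non-face $\tau\cup\sigma$, so $x_\tau x_\sigma = 0$ in $R$, hence in $D(R)$. Multiplying by $x_\sigma^{-1}$ in the localization then forces $x_\tau=0$ in $D(R)[x_\sigma^{-1}]$. Since every generator of $I_{st(\sigma)}$ lies in $\mathfrak{K}$, so does the two-sided extension $\overline{I_{st(\sigma)}}$.

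For the reverse $\mathfrak{K}\sseq\overline{I_{st(\sigma)}}$: let $x_\tau$ be one of the monomial generators of $\mathfrak{K}$ supplied by Theorem \ref{Thm:generators}. Its image in $D(R)[x_\sigma^{-1}]$ is zero, so the Ore-localization recipe yields some $n\ge 1$ with $x_\sigma^n x_\tau = 0$ in $D(R)$. Since $x_\sigma$ and $x_\tau$ both lie in the commutative subring $R\sseq D(R)$, this equation already holds in $R$, and the support of $x_\sigma^n x_\tau$ is exactly $\sigma\cup\tau$, so this support must be a non-face, i.e.\ $\tau\in U_\sigma$. Thus $x_\tau\in I_{st(\sigma)}$, and every generator of $\mathfrak{K}$ lies in $\overline{I_{st(\sigma)}}$.

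The principal technical subtlety is the Ore step: one needs to know that $\{x_\sigma^n\}$ is a (left or two-sided) Ore set in $D(R)$ so that vanishing in the localization is equivalent to being annihilated by a power of $x_\sigma$. This is implicit in speaking of ``the localization map'' in the first place, and for elements of the commutative subring $R$ it reduces to the usual commutative-localization criterion, so no subtle noncommutative geometry is actually needed. Past that point the proof is a clean matching of generators via Theorem \ref{Thm:generators} and Lemma \ref{Lemma:face-ideals}(2).
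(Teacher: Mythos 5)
Your proof is correct and follows essentially the same strategy as the paper: reduce via Theorem~\ref{Thm:generators} to tracking which reduced monomials $x_\tau$ vanish in the localization, and then match the answer against the description of $I_{st(\sigma)}$ from Lemma~\ref{Lemma:face-ideals}(2). The one genuine difference is organizational: where the paper first handles $\sigma$ a single vertex and then invokes Lemma~\ref{Lemma:stars}(iii) to recurse on $st(\sigma,K)=st(x_1,st(\sigma\setminus x_1,K))$, you instead treat general $\sigma$ at once by invoking the Ore criterion that $x_\tau$ dies in $D(R)[x_\sigma^{-1}]$ iff $x_\sigma^n x_\tau = 0$ in $D(R)$ for some $n$, and then observe that since $x_\sigma^n x_\tau\in R$ this reduces to the commutative fact that $x_\sigma x_\tau=0$ iff $\sigma\cup\tau\notin K$ iff $\tau\in U_\sigma$. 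This is arguably cleaner and makes the membership criterion for $\mathfrak{K}$ explicit (``no other monomials are killed'' in the paper is replaced by an actual equivalence). You are also right to flag that both proofs silently assume $\{x_\sigma^n\}$ is an Ore set in $D(R)$ so that the localization exists and its kernel has the stated form; the paper does not address this either, and your remark that the verification is harmless for elements of the commutative subring $R$ is the correct observation for the step actually used.
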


\begin{proof}
By \ref{Thm:generators} it is enough to examine what happens in the localization to monomials $x_\alpha$ for $\alpha\in K$.  
Assume first that $x_\sigma=x_i$ (in other words, $\sigma$ is a vertex). Inverting $x_i$ has the effect that for any non-face $\beta=\cup x_j$ containing $x_i$, the monomial $\tfrac{x_\beta}{x_i}=\prod_{x_j\in\beta,j\neq i}x_j$ is zero in the localization. It is clear that no other monomials are killed, so what remains after localization are those monomials supported on a face $\tau$ such that $\tau\cup x_i$ is not a non-face, or clearing negations, that $\tau\cup x_i$ is a face in $K$; in other words the remaining monomials are those supported on a face of $st(x_i)$. 

For the general case, note that inverting $x_\sigma=\prod_ix_i$ is the same as inverting each $x_i$ successively, and observing that we have from \ref{Lemma:stars}(iii) that $st(\sigma,K)=st(x_1,st(\sigma\setminus x_1,K))$, we are done by recursion.
\end{proof}

\begin{Thm}\label{Thm:lattice}
  The lattice of two-sided  ideals in $D(R)$ is generated by the 
ideals $\overline{I_{st(\sigma)}}\sse D(R)$. 
\end{Thm}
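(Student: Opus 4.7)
The plan is to show that every principal ideal $\langle x_\sigma\rangle$ (for a face $\sigma$ with $st(\sigma)\neq K$) can be written as an intersection of ideals of the form $\overline{I_{st(\tau)}}$. Combined with Theorem \ref{Thm:generators}, which says that every proper two-sided ideal is a sum of such principal ideals, this will prove the statement (with $D(R)$ itself handled as the empty intersection).

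The first step is to pin down which monomials $x_\alpha$ (for $\alpha\in K$) lie in $\overline{I_{st(\tau)}}$. By Theorem \ref{Thm:mainthm}, $\overline{I_{st(\tau)}}$ is the kernel of localization at $x_\tau$, so $x_\alpha\in \overline{I_{st(\tau)}}$ iff $x_\tau^N x_\alpha=0$ in $R$ for some $N$. Since $R$ is a squarefree monomial ring, this happens iff $\alpha\cup\tau\notin K$, i.e., iff $\alpha\notin st(\tau)$.

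The candidate formula, which I would then verify, is
\[
\langle x_\sigma\rangle=\bigcap_{\substack{\tau\in K\\ \sigma\notin st(\tau)}}\overline{I_{st(\tau)}}.
\]
The inclusion ``$\subseteq$'' is immediate from the first step: for every $\tau$ in the indexing set we have $x_\sigma\in \overline{I_{st(\tau)}}$, so $\langle x_\sigma\rangle\subseteq\overline{I_{st(\tau)}}$. For the reverse inclusion, note that by Theorems \ref{Thm:monomial-ideals} and \ref{Thm:generators} the intersection is a monomial ideal, so it suffices to check it monomial by monomial. A generator $x_\alpha$ lies in the intersection iff $\alpha\notin st(\tau)$ for every $\tau$ with $\sigma\notin st(\tau)$. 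Now I apply Lemma \ref{Lemma:stars}(v) on both sides to rewrite this symmetrically: the condition becomes ``for every $\tau\in K$, $\tau\notin st(\sigma)\Rightarrow\tau\notin st(\alpha)$'', which is exactly $st(\alpha)\subseteq st(\sigma)$. By Corollary \ref{Cor:ideal-containment} this gives $\langle x_\alpha\rangle\subseteq\langle x_\sigma\rangle$, i.e., $x_\alpha\in\langle x_\sigma\rangle$.

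The proof is essentially short: the only real content is the symmetry trick $\sigma\in st(\tau)\Leftrightarrow\tau\in st(\sigma)$ which lets one swap the roles of $\sigma$ and $\alpha$ and reduce the intersection back to a containment of stars. I do not anticipate any serious obstacle; the main thing to be careful about is bookkeeping with the conditions ``$\sigma\notin st(\tau)$'' and the edge case $st(\sigma)=K$, where the indexing set becomes empty and the intersection is $D(R)$, consistent with $\langle x_\sigma\rangle=D(R)$ in that situation via Theorem \ref{Thm:generators}.
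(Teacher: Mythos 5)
Your proof is correct and mirrors the paper's argument: both express $\langle x_\sigma\rangle$ as the intersection of all $\overline{I_{st(\tau)}}$ containing $x_\sigma$ and then collapse the resulting membership condition to $st(\alpha)\sseq st(\sigma)$ via the symmetry of Lemma \ref{Lemma:stars}(v), finishing with Corollary \ref{Cor:ideal-containment}. The only cosmetic differences are that the paper phrases things in terms of the open complements $U_\tau$ rather than stars, and that you make explicit (via Theorem \ref{Thm:mainthm}) the fact that the reduced monomials lying in $\overline{I_{st(\tau)}}$ are precisely those $x_\alpha$ with $\alpha\notin st(\tau)$, a point the paper uses implicitly when it equates the intersection of the extended ideals with the ideal generated by monomials indexed by $\bigcap U_\sigma$.
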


\begin{proof}
After applying \ref{Thm:generators} the question is whether we can generate any proper ideal $\langle x_\tau\rangle$ by sums and intersections of the ideals $\overline{I_{st(\sigma)}}$. Considering that $\overline{I_{st(\sigma)}}=\langle x_\alpha|\alpha\in U_\sigma\rangle$, we can look at the intersection of all such ideals that contain $x_\tau$:
\begin{align}
  \bigcap_{\sigma:\tau\in U_\sigma}\overline{I_{st(\sigma)}} =& \langle x_\alpha|\alpha\in\bigcap_{\sigma:\tau\in U_\sigma}U_\sigma\rangle\\
=& \langle x_\alpha|\forall \sigma\in K: \tau\in U_\sigma\Rightarrow\alpha\in U_\sigma\rangle\\
=& \langle x_\alpha|\forall \sigma\in K: \alpha\not\in U_\sigma\Rightarrow \tau\not\in U_\sigma\rangle\\
=& \langle x_\alpha|\forall \sigma\in K: \alpha\cup\sigma\in K\Rightarrow \tau\cup\sigma\in K\rangle\\
=& \langle x_\alpha|\forall \sigma\in K: \sigma\in st(\alpha)\Rightarrow \sigma\in st(\tau)\rangle\\
=& \langle x_\alpha|st(\alpha)\sse st(\tau)\rangle\\
=& \langle x_\tau\rangle
\end{align}
where the last step is applying Corollary \ref{Cor:ideal-containment}.
\end{proof}

\begin{Ex}\label{Ex:localization-three-1-simplices}
Consider again the ring from \ref{Ex:three-1-simplices}, $R=k[x_1,x_2,x_3,x_4]/I$ where $I=(x_1x_3,x_1x_4,x_2x_4)$; the associated complex $K$ is a chain of three 1-simplices. Inverting $x_1$ gives us that $x_3$ and $x_4$ go to zero in the localization as $x_3=\frac{1}{x_1} x_1x_3 \in I$, etc; it follows that the generators $x_4^a\dd_3^{(b)}$ are also killed; the kernel of the localization $D(R)\to D(R)[\frac{1}{x_1}]$ is then (using \ref{Thm:monomial-ideals} and \ref{Prop:xdy}) the ideal $(x_3,x_4)$, which is the face ideal of $st(x_1,K)$. Localizing at $x_2$ gives $x_3=\frac{1}{x_2} x_2x_4 = 0$, and the kernel of the localization is indeed equal to the ideal $(x_4)$, the face ideal of $st(x_2,K)$. Proceeding in the same manner for the remaining faces $x_3,x_2,\{x_1,x_2\},\{x_2,x_3\}$, and $\{x_3,x_4\}$, we get as possible kernels the ideals $(x_1),(x_4),(x_1,x_2),(x_3,x_4)$ and $(x_2,x_3)$. By \ref{Prop:xdy} we have $(x_1,x_2)=(x_2)$ and $(x_3,x_4)=(x_3)$; in other words our possible kernels of localization are the ideals $(x_1),(x_2),(x_3)$ and $(x_4)$; in light of \ref{Thm:monomial-ideals} these obviously generate all the  ideals by sums and intersections.
\end{Ex}

Let us round off this section with some applications. In \cite{Traves}, Traves examines the $D(R)$-module structure of $R$ when $k$ is a field, and determines what the (left) $D(R)$-submodules of $R$ are. These are the ideals $I\sse R$ such that $D(R)\bullet I = I$, so we follow Traves' terminology and call such a submodule a \emph{$D(R)$-stable ideal}. The reason for restricting $k$ to be a field is that elements of $D_k(R)$ are $k$-linear endomorphisms of $R$, so any ideal of $k$ extends to a $D_k(R)$-submodule of $R$. 

\begin{Thm}[Traves]\label{Thm:Traves-D-mod-struct-of-R}
  When $k$ is a field, the $D_k(R)$-submodules of the reduced monomial ring $R$ are exactly the ideals given by intersections of sums of minimal primes of $R$. 
\end{Thm}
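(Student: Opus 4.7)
The plan is to classify $D(R)$-stable ideals of $R$ in three stages: first reduce to square-free monomial generators, then compute the $D(R)$-submodule generated by a single $x_\tau$ as an intersection of minimal primes, then assemble using distributivity. The converse direction will be immediate: each minimal prime is a $D(R)$-submodule, and sums and intersections preserve this property.

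For the reduction to square-free generators, I would first argue that any $D(R)$-submodule $I\sse R$ is a monomial ideal, because the operators $(x_i\dd_i)^n\in D(R)$ act diagonally on monomials with eigenvalue $a_i^n$ on $x^a$, so a Vandermonde argument applied to an element $f=\sum c_a x^a\in I$ isolates each component $c_a x^a$. Then, reusing the manipulations of Theorem \ref{Thm:monomial-ideals}(2) adapted to the module action, the operators $x_i\dd_i^{(2)}$ in characteristic zero (respectively $x_i\dd_i^{(p^r)}$ combined with multiplication by $x_i^{p^r-a_i}$ in characteristic $p$) will reduce any exponent $a_i\ge 2$ by one. Iterating shows $x^a\in I\Rightarrow x_{\supp(a)}\in I$, so $I$ is generated as an $R$-ideal by the square-free monomials $x_\tau$ it contains.

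For the central computation, fix $\tau\in K$ and set
\[
J_\tau := \bigcap_{\substack{\sigma\text{ maximal}\\\tau\not\sseq\sigma}} I_{st(\sigma)}.
\]
I would claim $D(R)\cdot x_\tau=J_\tau$. Each minimal prime $I_{st(\sigma)}$ is $D(R)$-stable because Theorem \ref{Thm:mainthm} identifies it with the kernel of the $D(R)$-module map $R\to R[\tfrac{1}{x_\sigma}]$; hence $J_\tau$ is $D(R)$-stable and contains $x_\tau$ (when $\tau\not\sseq\sigma$, some variable of $x_\tau$ lies outside $\sigma$, forcing $x_\tau\in I_{st(\sigma)}$). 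Applying Lemma \ref{Lemma:face-ideals}(1) gives $J_\tau=I_{L_\tau}$ for $L_\tau=\bigcup_{\sigma\text{ max},\tau\not\sseq\sigma}st(\sigma)$, and Lemma \ref{Lemma:stars}(iv) then identifies $K\sm L_\tau=\{\alpha\in K: st(\alpha)\sseq st(\tau)\}$, so $J_\tau$ is generated as an $R$-ideal by these $x_\alpha$. For each such $\alpha$, Proposition \ref{Prop:xdy} shows $x_\alpha\dd_\tau\in D(R)$, and a one-line computation gives $(x_\alpha\dd_\tau)(x_\tau)=x_\alpha$, so $x_\alpha\in D(R)\cdot x_\tau$; combined with $D(R)\cdot x_\tau\sseq J_\tau$ (from $D(R)$-stability of $J_\tau$) this yields equality.

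Finally, writing $I=\sum_\tau D(R)\cdot x_\tau=\sum_\tau I_{L_\tau}=I_{\bigcap_\tau L_\tau}$ by Lemma \ref{Lemma:face-ideals}(1) and distributing intersection over union gives $\bigcap_\tau L_\tau=\bigcup_{(\sigma_\tau)}\bigcap_\tau st(\sigma_\tau)$ over tuples of maximal simplices with $\tau\not\sseq\sigma_\tau$. Applying Lemma \ref{Lemma:face-ideals}(1) once more produces
\[
I=\bigcap_{(\sigma_\tau)}\sum_\tau I_{st(\sigma_\tau)},
\]
the desired intersection of sums of minimal primes. The hard part will be the first stage: carefully adapting the Weyl-algebra manipulations of Theorem \ref{Thm:monomial-ideals} to the one-sided module action on $R$, handling the divided-power operators in both characteristic cases to push an arbitrary monomial down to its square-free radical inside $I$.
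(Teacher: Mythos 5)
Your proposal is correct, but it takes a genuinely different route from the paper. The paper first establishes a general fact (Proposition~\ref{Prop:D-stable}): for any $k$-algebra, $J\sse R$ is $D(R)$-stable if and only if $J=\overline{J}\cap R$, proved via the $D(R)$-module identification $R\simeq D(R)/D^{>0}(R)$; it then invokes the already-established structure of two-sided ideals of $D(R)$ (Theorems~\ref{Thm:generators} and~\ref{Thm:lattice}) and contracts to $R$. You instead work entirely on the module side: you show directly that a $D(R)$-submodule of $R$ is generated by square-free monomials (eigenvalue isolation via $x_i\dd_i$ in characteristic zero or $x_i^{p^r}\dd_i^{(p^r)}$ in characteristic $p$, then exponent reduction), compute $D(R)\bullet x_\tau$ explicitly as the intersection of all minimal primes containing $x_\tau$, and assemble by distributing intersection over union via Lemma~\ref{Lemma:face-ideals}(1). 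Your approach is more self-contained (it never touches the two-sided ideal lattice or the abstract Proposition~\ref{Prop:D-stable}) and the module-side exponent reduction is actually simpler than the two-sided computation in Theorem~\ref{Thm:monomial-ideals}(2), since you only need the one-sided action $(x_i\dd_i^{(2)})(x^a)=\binom{a_i}{2}x^{a-1_i}$ rather than commutator gymnastics; the paper's route, by contrast, records the reusable general fact about $D$-stable ideals, which has independent interest. One small inaccuracy: you cite Theorem~\ref{Thm:mainthm} for the $D(R)$-stability of $I_{st(\sigma)}$, but that theorem concerns the localization of $D(R)$, not of $R$. The fact you actually need --- that $\ker(R\to R[\tfrac{1}{x_\sigma}])=I_{st(\sigma)}$ is a $D(R)$-submodule because differential operators localize --- is true and standard, but is not literally what~\ref{Thm:mainthm} says; a cleaner in-paper justification would be to note $\overline{I_{st(\sigma)}}\cap R=I_{st(\sigma)}$ as in the proof of Theorem~\ref{Thm:D-stable-I-sigma}.
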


Based on our results about the ideal structure of $D(R)$, we can give a new proof of this result. We denote the module action of $D(R)$ by $\bullet$ (e.g. $D(R)\bullet I$) and the product in $D(R)$ by $\cdot$ (e.g. $D(R)\cdot I$). We prove the result by means of a general fact which to our knowledge is previously unknown. 

\begin{Prop}\label{Prop:D-stable}
  Let $k$ be a field and $R$ be a $k$-algebra. An ideal $J\sse R$ is $D(R)$-stable if and only if $J=\overline{J}\cap R$, where $\overline{J}$ denotes the extension of $J$ to $D(R)$.
\end{Prop}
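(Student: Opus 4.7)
The forward direction is a direct computation. Given $r\in\overline{J}\cap R$, write $r=\sum_i\phi_ij_i\psi_i$ as an equality in $D(R)$, with $\phi_i,\psi_i\in D(R)$ and $j_i\in J$. Evaluating both sides on $1\in R$ gives
\[
r = r\bullet 1 = \sum_i\phi_i\bullet\bigl(j_i(\psi_i\bullet 1)\bigr).
\]
Each $j_i(\psi_i\bullet 1)$ lies in $J$ since $J$ is an $R$-ideal, and by $D(R)$-stability the action of $\phi_i$ keeps each summand inside $J$, so $r\in J$. Combined with the trivial $J\sseq\overline{J}\cap R$ this gives the equality.

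For the backward direction, assume $J=\overline{J}\cap R$; we need $D(R)\bullet J\sseq J$. Using the $R$-module retraction $D(R)\to R$, $\xi\mapsto\xi\bullet 1$, and unpacking a typical element of $\overline{J}$, one obtains $\overline{J}\bullet 1=D(R)\bullet J$ as $R$-submodules of $R$. Thus the conclusion is equivalent to the containment $\overline{J}\bullet 1\sseq J$, which (using the hypothesis $\overline{J}\cap R=J$) will follow once we prove that for every $\xi\in\overline{J}$ the element $\xi\bullet 1\in R\inj D(R)$ actually lies in $\overline{J}$.

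I would prove this by induction on $n=\mathrm{ord}(\xi)$. The base case $n=0$ is immediate: $\xi\in\overline{J}\cap R$, and the multiplication operator by $\xi\bullet 1=\xi$ is just $\xi$ itself, which is in $\overline{J}$. For the inductive step, reduce by linearity to a single term $\xi=\phi\cdot j\cdot\psi$, set $j':=j(\psi\bullet 1)\in J$, and observe $\xi\bullet 1=\phi\bullet j'$. The commutator identity $\phi\cdot j'-j'\cdot\phi=[\phi,j']$ evaluated at $1$ rewrites in $R\inj D(R)$ as
\[
\phi\bullet j' = j'(\phi\bullet 1) + [\phi,j']\bullet 1.
\]
The first summand lies in $J\sseq\overline{J}$, and $[\phi,j']$ lies in $\overline{J}$ (by two-sidedness, using $j'\in J$) with order strictly less than $\mathrm{ord}(\phi)$. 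Hence the inductive hypothesis applies to $[\phi,j']$ and gives $[\phi,j']\bullet 1\in\overline{J}$, completing the step.

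The main technical point -- and the only real obstacle -- is ensuring an order-controlled decomposition: every $\xi\in\overline{J}$ of order $n$ admits an expansion $\xi=\sum_i\phi_ij_i\psi_i$ with all $\mathrm{ord}(\phi_i)\le n$, so that $\mathrm{ord}([\phi_i,j_i'])<n$ and the inductive hypothesis is applicable. In the Stanley-Reisner setting of this paper this is transparent from the free $k$-basis $\{x^a\dd^{(b)}\}$ of $D(R)$ given by Theorem~\ref{Thm:Traves} together with the monomial generators of $\overline{J}$ from Theorem~\ref{Thm:generators}: expanding $\xi$ in the basis and grouping by order discards nothing, so one can always represent $\xi$ by terms with $\mathrm{ord}(\phi_i)+\mathrm{ord}(\psi_i)\le n$.
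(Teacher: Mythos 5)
Your forward direction is fine. Your backward direction, however, hinges on an ``order-controlled decomposition'' lemma that you flag as ``the only real obstacle'' and then wave away as ``transparent'' --- and it is in fact false. Take the running example $R=k[x_1,x_2,x_3]/(x_1x_2x_3)$, where $D(R)$ contains only the toric operators $x_i^{a}\dd_i^{(b)}$ (Example~\ref{Ex:empty-2-simplex}), and take $J=(x_1)$, so $\overline{J}=\langle x_1\rangle$. The element $\xi=x_1\dd_1$ lies in $\overline{J}$ (for instance $x_1\dd_1=[x_1\dd_1^{(2)},x_1]$) and has order $1$, but it admits no expansion $\sum_i\phi_ij_i\psi_i$ with $j_i\in(x_1)$ and all $\mathrm{ord}(\phi_i)\le 1$. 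Indeed, in this ring every $\phi\in D(R)$ of order $\le 1$ has $\deg_1(\phi)\ge 0$ (Traves' criterion forces $a_1\ge 1$ whenever the $\dd_1$-order is $1$), and every $j\in(x_1)$ has $\deg_1(j)\ge 1$; so in any product $\phi j\psi$ contributing to the $\deg_1=0$ component one needs $\deg_1(\psi)\le -1$, which by Traves again forces $\psi$ to contain a factor $x_1^{c}\dd_1^{(d)}$ with $c\ge 1$ and $d\ge 2$. Carrying the $\dd_1$'s across the $x_1$'s then never produces the term $x_1\dd_1$ --- the $\dd_1$-order of every surviving monomial stays $\ge 2$. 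So the inductive hypothesis is never applicable, and the induction does not get off the ground. Merely expanding $\xi$ in the basis $\{x^a\dd^{(b)}\}$ gives no $\phi_ij_i\psi_i$-decomposition at all, let alone one with bounded $\mathrm{ord}(\phi_i)$.

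The paper's proof sidesteps all of this and is worth comparing to. It identifies $R$ with $D(R)/D^{>0}(R)$ (quotient by the left ideal of positive-order operators), so that for an ideal $I\sse R$ the submodule $D(R)\bullet I$ is identified with $(\overline{I}+D^{>0}(R))/D^{>0}(R)$; the proposition then reduces to showing that for a two-sided ideal $J\sse D(R)$, the image of $J$ in $R$ is exactly $J\cap R$. That last step is a one-liner via Theorem~\ref{Thm:monomial-ideals}: write $f\in J$ as the sum of its order-homogeneous pieces $f_0+f_1+\cdots$ in the basis $\{x^a\dd^{(b)}\}$; part (1) of that theorem's proof shows each $f_i\in J$, and since $f\bullet 1=f_0\in R$ this gives $f\bullet 1\in J\cap R$. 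Put differently, the commutator manoeuvre you attempt is unnecessary once one observes that $\xi\bullet 1$ is literally the order-zero graded piece $\xi_0$ of $\xi$, and that Theorem~\ref{Thm:monomial-ideals} already puts $\xi_0$ in $\overline{J}$. If you want to keep the spirit of your argument, replace the problematic induction by that single appeal to Theorem~\ref{Thm:monomial-ideals}; your surrounding reductions (showing $D(R)\bullet J=\overline{J}\bullet 1$, etc.) are correct and can be retained.
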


\begin{proof}
Observe first that $R$ is isomorphic as a $D(R)$-module to $D(R)/D^{>0}(R)$, the quotient by the left ideal of positive order elements; we can see this by writing $D(R) = D^0(R)+D^{>0}(R) = R+D^{>0}(R)$, as $R=D^0(R)$. In other words, if $S\sse R$ is a subset, then under this isomorphism $D(R)\bullet S = D(R)\cdot S + D^{>0}(R)$. Further, if $J\in D(R)$ is some subset, then 
\begin{align*}
J\cdot D(R)+D^{>0}(R) =& J\cdot (D(R)^0+D^{>0}(R)) + D^{>0}(R)\\
=& J\cdot R + D^{>0}(R).
\end{align*}
Now, if $I\sse R$ is an ideal, the extension of $I$ to $D(R)$ is $\overline{I}=D(R)\cdot I\cdot D(R)$, so we have
\begin{align*}
  \overline{I}+D^{>0}(R) =& D(R)\cdot I\cdot D(R)+D^{>0}(R)\\
=& D(R)\cdot I + D^{>0}(R)\\
=& D(R)\bullet I.
\end{align*}
A $D$-stable ideal is an ideal $I\sse R$ such that $D(R)\bullet I = I$, so it follows that the $D$-stable ideals are exactly those such that $\overline{I}+D^{>0}(R) = I$. 

It remains to show that for an ideal $J\sse D(R)$, $J+D^{>0}(R) = J\cap R$. Let $f\in J$ be some element, and write it as the sum $f=f_0+f_1+\cdots+f_{ord(f)}$ where $f_i$ are the terms of order $i$; it then follows from \ref{Thm:monomial-ideals} that also each $f_i\in J$. Reducing modulo $D^{>0}(R)$ we get $J+D^{>0}(R) = \{f_0|f\in J\}$, and restricting to the homogenous elements of order zero we have $J\cap R = J\cap D^0(R) = \{f\in J|f=f_0\}$; these sets clearly are equal.
\end{proof}

\begin{Thm}\label{Thm:D-stable-I-sigma}
  The $D(R)$-stable ideals of $R$ are those generated by sums and intersections of the ideals $I_{st(\sigma)}$ for $\sigma\in K$.
\end{Thm}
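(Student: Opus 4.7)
The plan is to combine Proposition \ref{Prop:D-stable} with Theorem \ref{Thm:lattice}. Proposition \ref{Prop:D-stable} tells us that a $D$-stable ideal of $R$ is precisely an ideal $I$ with $I = \overline{I} \cap R$, while Theorem \ref{Thm:lattice} tells us that every two-sided ideal of $D(R)$ (in particular $\overline{I}$) is obtained from the $\overline{I_{st(\sigma)}}$ by sums and intersections. The task therefore reduces to showing (a) $\overline{I_{st(\sigma)}} \cap R = I_{st(\sigma)}$, and (b) that intersection with $R$ commutes with sums and intersections of two-sided ideals of $D(R)$.

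For (a), by Proposition \ref{Prop:D-stable} it suffices to verify that $I_{st(\sigma)}$ is itself $D$-stable. I would take a generator $x^a\dd^{(b)}$ of $D(R)$ with $\alpha = \supp(x^a)$, $\beta = \supp(x^b)$ satisfying $st(\alpha) \sse st(\beta)$ (Proposition \ref{Prop:xdy}), together with a generator $x_\tau$ of $I_{st(\sigma)}$ (so $\tau \in U_\sigma$), and compute. Since $x_\tau$ is squarefree, $x^a\dd^{(b)} \bullet x_\tau$ vanishes unless $\beta \sse \tau$, in which case it equals (up to a constant) $x^a \cdot x_{\tau\sm\beta}$, supported on $\alpha\cup(\tau\sm\beta)$. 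If this support were not in $U_\sigma$, i.e.\ if $\alpha\cup(\tau\sm\beta)\cup\sigma$ were a face of $K$, then setting $\gamma=(\tau\sm\beta)\cup\sigma$ would give $\gamma\in st(\alpha)\sse st(\beta)$, whence $\beta\cup\gamma=\tau\cup\sigma\in K$, contradicting $\tau\in U_\sigma$.

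For (b), intersection is immediate. For sums, the order-grading argument from the proof of Theorem \ref{Thm:monomial-ideals} applies: given $f \in (J_1+J_2)\cap R$, write $f=g_1+g_2$ with $g_i \in J_i$ and decompose each $g_i$ into order-homogeneous components, each of which remains in the corresponding $J_i$ by Theorem \ref{Thm:monomial-ideals}; taking the order-zero part of both sides gives $f=(g_1)_0+(g_2)_0 \in (J_1\cap R)+(J_2\cap R)$. Combining (a) and (b), an arbitrary $D$-stable ideal $I=\overline{I}\cap R$ becomes the corresponding lattice combination of $\overline{I_{st(\sigma)}}\cap R = I_{st(\sigma)}$, and the reverse containment is automatic since $D$-stability is preserved under sums and intersections. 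The main obstacle is (a), specifically the combinatorial verification of $D$-stability of $I_{st(\sigma)}$; the remainder is a formal manipulation of previously established results.
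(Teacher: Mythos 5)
Your proposal is correct and shares the paper's high-level strategy (combine Proposition \ref{Prop:D-stable} with Theorem \ref{Thm:lattice}, then contract to $R$), but you establish the pivotal identity $\overline{I_{st(\sigma)}}\cap R = I_{st(\sigma)}$ by a genuinely different route. The paper proves it by identifying exactly which monomials $x_\alpha$ lie in the contraction, using Corollary \ref{Cor:ideal-containment} together with the reciprocity fact Lemma \ref{Lemma:stars}$(v)$: one must rule out $\tau\in U_\sigma$ and $\alpha\in st(\sigma)$ with $st(\alpha)\sse st(\tau)$, which contradicts $\tau\in U_\sigma$. You instead verify directly that $I_{st(\sigma)}$ is $D$-stable by applying a generating operator $x^a\dd^{(b)}$ to a monomial and chasing supports through Proposition \ref{Prop:xdy}; this then yields the same identity via Proposition \ref{Prop:D-stable} in the opposite logical direction. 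Your approach has the virtue of being concrete and self-contained (it does not route through Corollary \ref{Cor:ideal-containment}), and you also make explicit the point that contraction to $R$ commutes with sums and intersections — something the paper's proof quietly glides over — using the order-grading of two-sided ideals implied by Theorem \ref{Thm:monomial-ideals}.

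One small gap: you only check $x^a\dd^{(b)}\bullet x_\tau \in I_{st(\sigma)}$ for the squarefree generators $x_\tau$ with $\tau\in U_\sigma$, but $D$-stability requires $\phi\bullet f \in I_{st(\sigma)}$ for arbitrary $f\in I_{st(\sigma)}$, and $\phi\bullet(rx_\tau)\ne r(\phi\bullet x_\tau)$. The fix is routine: $I_{st(\sigma)}$ is $k$-spanned by monomials $x^c$ whose support $\gamma=\supp(x^c)$ lies in $U_\sigma$ (since $U_\sigma$ is closed upward in $K$), and the same support-chasing argument applies to $x^a\dd^{(b)}\bullet x^c$ — non-vanishing forces $\beta\sse\gamma$, the result is supported on a face containing $\alpha\cup(\gamma\sm\beta)$, and the identical contradiction via $st(\alpha)\sse st(\beta)$ shows this face is in $U_\sigma$. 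You should also note that "vanishes unless $\beta\sse\tau$" tacitly requires $b$ squarefree, but that is absorbed by the vanishing.
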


\begin{proof}
  As we have shown (\ref{Thm:generators}, \ref{Thm:lattice}) that any ideal of $D(R)$ is an extension of an ideal of $R$, we only have to restrict these to $R$ to recover the $D(R)$-stable ideals. Theorem \ref{Thm:lattice} tells us that the lattice of ideals in $D(R)$ is generated by sums and intersections of ideals $\overline{I_{st(\sigma)}}$, and it is easy to see that 
$\overline{I_{st(\sigma)}}\cap R = I_{st(\sigma)}$:
Indeed, the only possible problem is that in $D(R)$, $\langle x_\alpha\rangle\sse\langle x_\beta\rangle$ if and only if $st(\alpha)\sse st(\beta)$, and this may cause additional monomials not in $I$ to appear in $\overline{I}\cap R$. For $I_{st(\sigma)}$ however, this does not happen. Consider that $I_{st(\sigma)}=\langle x_\tau|\tau\in U_\sigma\rangle$ and $\overline{I_{st(\sigma)}}\cap R=\langle x_\tau|\tau\in U_\sigma\rangle=\langle x_\alpha|\exists \tau\in U_\sigma: st(\alpha)\sse st(\tau)\rangle$. In other words, we need to check if there are faces $\tau\in U_\sigma$ and $\alpha \in st(\sigma)$ such that $st(\alpha)\sse st(\tau)$, as then $x_\alpha$ would be in $\overline{I_{st(\sigma)}}\cap R$, but not in $I_{st(\sigma)}$. This is impossible, however: by \ref{Lemma:stars}$(v)$, $\alpha \in st(\sigma)$ if and only if $\sigma\in st(\alpha)$, and if $st(\alpha)\sse st(\tau)$, we have $\sigma\in st(\tau)$, which again by \ref{Lemma:stars}$(v)$ gives $\tau\in st(\sigma)$, which contradicts the assumption $\tau\in U_\sigma$.
\end{proof}

To recover \ref{Thm:Traves-D-mod-struct-of-R}, recall that by \ref{Lemma:face-ideals}, the minimal primes are exactly the face ideals of the maximal faces of $K$, and any $I_{st(\sigma)}$ is the intersection of the face ideals of the maximal faces of $st(\sigma)$.

\begin{Rem}
  Recall that the partially ordered set of two-sided ideals of $D(R)$ (or bijectively, the $D(R)$-stable ideals of $R$) is in order-reversing bijection with the partially ordered set of closed stars of $K$. This partially ordered set can be completed to a simplicial complex ($\widetilde{K}$, say), homotopic to the nerve of the cover of $K$ by open stars. The results about two-sided ideals of $D(R)$ and $D(R)$-stable ideals of $R$ imply that subcomplexes $L$ of $K$ such that $I_L$ is $D(R)$-stable or $\overline{I_L}$ is a two-sided ideal of $D(R)$ are exactly those that are unions of intersections of closed stars; in other words the complex $\widetilde{K}$ classifies such subcomplexes. This interesting connection is perhaps worthy of further study.
\end{Rem}

%
\section{Characteristic $p$}\label{Sec:char-p}
%

The constructions in the previous section are independent of the characteristic of $k$, and so solve the problem of finding the two-sided ideal structure of $D(R)$. In characteristic $p$ however, there is a qualitatively different construction of $D(R)$, which perhaps offers more interesting possibilities for generalization. From here on, we assume $k$ is a field of characteristic $p$.

The major tool when working in characteristic $p$ is the Frobenius automorphism of $k$, given by $x\mapsto x^p$. This induces an endomorphism $F:R\to R$ given by $F(f) = f^p$, and the image $F(R)$ is the subring $R^p \sse R$ of $p$'th powers; as $R$ is reduced $F$ is also an isomorphism onto its image. Any $R$-module $M$ gets a new $R$-module structure through the pullback by the Frobenius map, namely $F_*M$ is equal to $M$ as an abelian group, but has $R$-module structure given by $f\cdot m = f^p m$. This is equivalent to considering $M$ as an $R^p$-module, as the maps $F:R\to R$ and $R^p\inj R$ both are injections with image $R^p$. We will have need for considering also iterates of $F$, so if we let $q=p^r$ we write $F^r:R\to R$ or $R^q=R^{p^r}\sse R$. For our purposes in examining $D(R)$, it will be most convenient to use the description in terms of the subrings $R^q$, as we will see. 

Considering the behaviour $R$ itself as an $R^p$-module gives rise to several classifying properties of the ring $R$. We will simply recall the definitions of the particular properties that are relevant for us, other such properties and further details may be found in \cite{Smith-vdB}. If $R$ is finitely generated as an $R^p$-module, we say that $R$ is \emph{$F$-finite}; if $R$ is $F$-finite and the map $R^p\inj R$ splits as a map of $R^p$-modules we say $R$ is \emph{$F$-split}; if $F^r_*R\simeq M_1^r\oplus\cdots\oplus M_{n(r)}^q$ as an $R$-module and the set of isomorphism classes $\{[M_i^r]|r\in \NN,1\le i\le n(r)\}$ of modules appearing in such a decomposition for some $r$ is finite, we say that $R$ has \emph{finite $F$-representation type}, or \emph{FFRT}.

For our purposes, the key property of face rings $R_K$ in this respect is that they are $F$-split and have FFRT. Even better, we can give a concrete decomposition of $R$ as an $R^q$-module:

\begin{Lemma}\label{Lemma:Fr-decomp}
  As an $R^q$-module, $R$ is isomorphic to $\bigoplus_{st(\sigma)\sse K}(R^q_{st(\sigma)})^{m_{st(\sigma)}(q)}$, where $m_{st(\sigma)}(q)=\sum_{\alpha:st(\alpha)= st(\sigma)}(q-1)^{(\dim(\alpha)+1)}$.
\end{Lemma}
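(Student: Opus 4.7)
The plan is to construct an explicit $R^q$-module decomposition of $R$ by Euclidean division modulo $q$. Every monomial $x^a$ in $R$ decomposes uniquely as $x^{qb}\cdot x^c$ with $c\in\{0,1,\ldots,q-1\}^n$, and sorting the monomial basis of $R$ by the residue $c$ yields a direct sum decomposition
\[
R \;=\; \bigoplus_{c} R^q\cdot x^c
\]
as $R^q$-modules, where $c$ ranges over residue vectors in $\{0,\ldots,q-1\}^n$. A summand is nonzero precisely when $\supp(c)$ is a face of $K$, since otherwise $x^c$ already vanishes in $R$.

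Next, I would identify each nonzero summand up to isomorphism. The surjection $R^q\surj R^q\cdot x^c$, $y\mapsto y\cdot x^c$, has kernel the annihilator of $x^c$ in $R^q$. Unwrapping what $x^{qb}\cdot x^c = 0$ means in $R$, namely that $\supp(b)\cup\supp(c)\not\in K$, i.e.\ $\supp(b)\in U_{\supp(c)}$, this annihilator is exactly the Frobenius pullback of the ideal $I_{st(\supp(c),K)}\sse R$ (using the description $I_{st(\sigma)}=\langle x_\tau\mid \tau\in U_\sigma\rangle$ from \ref{Lemma:face-ideals}). Hence each nonzero summand $R^q\cdot x^c$ is isomorphic, as an $R^q$-module, to $R^q_{st(\supp(c))}$.

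Finally, I would count multiplicities. The number of residues $c\in\{0,\ldots,q-1\}^n$ with a fixed support $\alpha\in K$ is $(q-1)^{|\alpha|}=(q-1)^{\dim(\alpha)+1}$, since each $c_i$ with $x_i\in\alpha$ ranges freely over $\{1,\ldots,q-1\}$ and $c_i=0$ for $x_i\not\in\alpha$. Because the isomorphism class of $R^q\cdot x^c$ depends only on the closed star $st(\supp(c))$, collecting the residues whose supports $\alpha$ satisfy $st(\alpha)=st(\sigma)$ produces the multiplicity $m_{st(\sigma)}(q)=\sum_{\alpha:\,st(\alpha)=st(\sigma)}(q-1)^{\dim(\alpha)+1}$, and summing over the distinct stars of $K$ gives the stated decomposition.

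The main obstacle is the middle step: correctly computing the annihilator of $x^c$ in $R^q$ and checking that, under the Frobenius isomorphism $R\to R^q$ of reduced rings, it really corresponds to $I_{st(\supp(c))}$ so that the identification $R^q\cdot x^c \cong R^q_{st(\supp(c))}$ is as $R^q$-modules and not merely as abelian groups. The rest is essentially combinatorial bookkeeping of residue vectors.
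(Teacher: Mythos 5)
Your proof is correct and follows essentially the same strategy as the paper: decompose $R$ over $R^q$ by monomial residue vectors, identify each summand $R^q\cdot x^c$ with $R^q_{st(\supp(c))}$ via the annihilator, and count residues with a fixed support. The only cosmetic difference is that you work directly with an arbitrary power $q=p^r$ rather than proving the $q=p$ case and iterating, which slightly streamlines the argument.
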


Note that the direct sum runs over those subcomplexes of $K$ that is the star of some simplex. 

\begin{proof}
  As we have $R\simeq R^p\oplus R^px_1 \oplus \cdots \oplus R^p x_1^{p-1}\cdots x_n^{p-1}$ (where only the appropriate monomials appear), this expresses $R$ as an $R^p$-module. 
We can rewrite this using $R^p\cdot x^\alpha \simeq R^p/Ann_{R^p}(x^\alpha)$, and observing that as the monomials $x^\alpha$ that appear in the decomposition are those supported on a face $supp(\alpha)=:\sigma$, and that the annihilator of $x^\alpha$ is the face ideal of the complex $st(\sigma,K)$, we get the decomposition $R=\bigoplus_{\sigma\in K}(R^p_{st(\sigma)})^{m_{st(\sigma)}}$, where $R^p_{st(\sigma)}$ is the ($p$'th power) face ring of $st(\sigma)$ and by simply counting monomials we have $m_{st(\sigma)}=\sum_{\alpha:st(\alpha)= st(\sigma)}(p-1)^{(\dim(\alpha)+1)}$ (using the convention that $\dim(\varnothing)=-1$). Iterating the same construction, we get $R=\bigoplus_{\sigma\in K}(R^q_{st(\sigma)})^{m_{st(\sigma)}(q)}$, where $m_{st(\sigma)}(q)=(q-1)^{(\dim(\sigma)+1)}$.
\end{proof}

Let us make use of this to compute some invariants of $R$ that only make sense in characteristic $p$, namely the \emph{Hilbert-Kunz function} and the \emph{Hilbert-Kunz multiplicity}. This invariant was introduced by Kunz \cite{Kunz} for local rings, and extended to graded rings by Conca \cite{Conca}; see also \cite{Huneke} and \cite{Monsky}.

\begin{Defn}\label{Defn:HK-mult}
Let $R$ be a local ring with maximal ideal $\mathfrak{m}$, or a graded ring with homogenous maximal ideal $\mathfrak{m}$, over a field $k$ of characteristic $p$, and let $q=p^r$. The \emph{Hilbert-Kunz function} of a ring $R$ is the function
\[
HK_R(q)=l(R/\mathfrak{m}^{[q]})
\]
where $I^{[q]}$ is the ideal generated by $q$'th powers of elements in the ideal $I$. The \emph{Hilbert-Kunz multiplicity} is the number
\[
e_{HK}(R) = \lim_{q\to\infty} \frac{HK_R(q)}{q^{\dim R}},
\]
in other words the leading coefficient of $HK_R(q)$.
\end{Defn}

The Hilbert-Kunz function gives a measure of singularity of $R$, roughly speaking higher multiplicities correspond to worse singularities. It is a theorem of Kunz that $HK_R(q)=q^{\dim R}$ if and only if $R$ is regular (see \cite{Kunz}), so if $R$ is regular, $e_{HK}(R)=1$. The converse holds for \emph{unmixed} rings, but not in general, and in particular not for face rings. The following is equivalent to Remark 2.2 in \cite{Conca}, though we prove it in a different way.

\begin{Prop}\label{Prop:HK-of-R}
  Let $R_K$ be a face ring, then $HK_R(q) = \sum_{i=-1}^{\dim(R)-1}f_i(q-1)^{i+1}$, where $f_i$ is the number of $i$-simplices in $K$, so $(f_{-1},\ldots,f_{\dim(R)-1})$ is the $f$-vector of $K$ (we recall the usual convention $\dim(\varnothing)=-1$, so $f_{-1}=1$). In particular, $e_{HK}(R_K)=f_{\dim K}$, the number of top-dimensional faces of $K$.
\end{Prop}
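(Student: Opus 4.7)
The plan is to compute $HK_R(q) = \dim_k R/\mathfrak{m}^{[q]}$ by exhibiting an explicit $k$-basis of the quotient, and then read off the leading term to get $e_{HK}(R_K)$.

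Since $R = k[x_1,\ldots,x_n]/I_K$ and $I_K$ is a monomial ideal, a $k$-basis of $R$ is given by the monomials $x^a$ with $\supp(x^a) \in K$. The maximal ideal is $\mathfrak{m} = (x_1,\ldots,x_n)$, so $\mathfrak{m}^{[q]} = (x_1^q,\ldots,x_n^q)$, and a $k$-basis of $R/\mathfrak{m}^{[q]}$ consists of those monomials above that additionally satisfy $a_i < q$ for every $i$. I would organize this count by support: for each face $\sigma \in K$, the monomials supported \emph{exactly} on $\sigma$ with all exponents in $\{1,\ldots,q-1\}$ number exactly $(q-1)^{|\sigma|}$, with the empty face $\sigma = \varnothing$ contributing the single monomial $1$. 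Summing over all faces and grouping by dimension gives
\[
HK_R(q) \;=\; \sum_{\sigma \in K}(q-1)^{\dim\sigma + 1} \;=\; \sum_{i=-1}^{\dim K} f_i(q-1)^{i+1},
\]
which is the stated formula, since $\dim R = \dim K + 1$.

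Alternatively, the same formula falls out of Lemma \ref{Lemma:Fr-decomp}: tensoring the decomposition $R \simeq \bigoplus_{st(\sigma)\sseq K}(R^q_{st(\sigma)})^{m_{st(\sigma)}(q)}$ over $R^q$ with $R^q/(x_1^q,\ldots,x_n^q)$ collapses each summand $R^q_{st(\sigma)}$ to a single copy of the residue field, because every face ideal $I_{st(\sigma)}$ in $R^q$ is contained in the maximal ideal being quotiented out. The resulting total $k$-dimension is $\sum_{st(\sigma)\sseq K} m_{st(\sigma)}(q) = \sum_{\alpha \in K}(q-1)^{\dim\alpha + 1}$, the same polynomial as above.

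For the Hilbert-Kunz multiplicity, expand: the only term of degree $\dim K + 1$ in $q$ is the $i = \dim K$ summand, where $(q-1)^{\dim K + 1} = q^{\dim K + 1} + O(q^{\dim K})$. Dividing $HK_R(q)$ by $q^{\dim R} = q^{\dim K + 1}$ and letting $q \to \infty$ yields $e_{HK}(R_K) = f_{\dim K}$, as claimed. There is no real obstacle in this argument; the only point requiring care is to track the convention $\dim(\varnothing) = -1$ and $f_{-1} = 1$ consistently, which accounts for the constant term $1$ in $HK_R(q)$.
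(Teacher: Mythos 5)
Your proposal is correct, and your primary argument takes a slightly different (and more self-contained) route than the paper. The paper derives $HK_R(q)$ from Lemma \ref{Lemma:Fr-decomp}, counting the indecomposable summands of $R$ as an $R^q$-module and observing that each contributes exactly one generator surviving modulo $\mathfrak{m}^{[q]}$ --- which is precisely your \emph{alternative} argument (tensoring the decomposition with $R^q/\mathfrak{m}_{R^q}$). Your primary argument instead bypasses the Frobenius decomposition entirely: since both $I_K$ and $\mathfrak{m}^{[q]}$ are monomial, $R/\mathfrak{m}^{[q]}$ has the obvious monomial basis, and grouping by support immediately gives $\sum_{\sigma\in K}(q-1)^{\dim\sigma+1}$. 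This is shorter and requires no characteristic-$p$ machinery at all, at the small cost of not illustrating the Frobenius-splitting viewpoint that the rest of Section 4 is built around. Of course, Lemma \ref{Lemma:Fr-decomp} is itself proved by the same monomial bookkeeping, so at bottom the two routes are the same count; but yours avoids the detour. The passage from $HK_R(q)$ to $e_{HK}(R_K)=f_{\dim K}$ via the leading coefficient is handled correctly, including the convention $\dim(\varnothing)=-1$.
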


\begin{proof}
  The number of indecomposable summands of $R$ as an $R^q$-module is $\sum_{\sigma\in K}(q-1)^{dim(\sigma)+1}$ by \ref{Lemma:Fr-decomp}. By simply rearranging the sum, this is equal to $\sum_{i=-1}^{\dim(R)-1}f_i(q-1)^{i+1}$. The claim now follows from the fact that none of the generators of these summands are in $\mathfrak{m^{[q]}}=\langle x_1^q,\ldots ,x_n^q\rangle$, so the number of summands in the splitting of $R$ is the same as the length of $R/\mathfrak{m^{[q]}}$.
\end{proof}

The promised different construction of $D(R)$ is due to Yekutieli \cite{Yekutieli}. We omit the proof here, but mention that in addition to \cite{Yekutieli}, the reader can find an excellent exposition in \cite{Smith-vdB}.

\begin{Prop}\label{Prop:Yekutieli}
  $D_k(R)\simeq \bigcup_q End_{R^q}(R)$, where $q=p^r, r\in \NN$ and $R^q$ is the subring of $q$-th powers.
\end{Prop}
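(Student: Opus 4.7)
My plan is to prove the stated isomorphism by establishing the two inclusions separately, both hinging on a single algebraic identity. Since $R$ is commutative, the left- and right-multiplication operators $L_f$ and $R_f$ commute on $\mathrm{End}_k(R)$, so in characteristic $p$ the freshman's dream yields
\[
\mathrm{ad}_f^q \;=\; (L_f-R_f)^q \;=\; L_{f^q}-R_{f^q} \;=\; \mathrm{ad}_{f^q}
\]
for every $f\in R$ and every $q=p^r$. This single identity drives both directions.

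For the easy inclusion $D_k(R)\sseq\bigcup_q \mathrm{End}_{R^q}(R)$: if $\phi\in D_k^d(R)$, then by definition of the order filtration $\mathrm{ad}_f^{d+1}(\phi)=0$ for every $f\in R$, so for any $q\ge d+1$ the identity above gives $\mathrm{ad}_{f^q}(\phi)=0$; thus $\phi$ commutes with every element of $R^q$, i.e.\ $\phi\in\mathrm{End}_{R^q}(R)$. For the reverse inclusion, let $\phi\in\mathrm{End}_{R^q}(R)$. I would first observe that $\mathrm{ad}_f(\phi)$ is again $R^q$-linear for any $f\in R$, since left multiplications by elements of $R^q$ commute with $\phi$. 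Applying the identity on $\mathrm{End}_{R^q}(R)$ then gives $\mathrm{ad}_{x_i}^q(\phi)=\mathrm{ad}_{x_i^q}(\phi)=0$, because $x_i^q\in R^q$. Thus each $\mathrm{ad}_{x_i}$ is nilpotent of index $\le q$; and since the $\mathrm{ad}_{x_i}$ pairwise commute (as $[x_i,x_j]=0$), pigeonhole yields $\mathrm{ad}_{x_{i_1}}\cdots\mathrm{ad}_{x_{i_N}}(\phi)=0$ whenever $N\ge n(q-1)+1$.

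To upgrade this to a bound on the order of $\phi$, I would use $k$-multilinearity of the expression $\mathrm{ad}_{f_0}\cdots\mathrm{ad}_{f_d}(\phi)$ to reduce to the case $f_i=x^{\alpha_i}$, then iteratively apply the Leibniz identity $\mathrm{ad}_{fg}(\psi)=\mathrm{ad}_f(\psi)\,g+f\,\mathrm{ad}_g(\psi)$, combined with the fact that $\mathrm{ad}_{x_j}$ passes unchanged through left and right multiplication by monomials. The outcome is an expansion of $\mathrm{ad}_{x^{\alpha_0}}\cdots\mathrm{ad}_{x^{\alpha_d}}(\phi)$ as a sum of terms of the form $x^A\cdot\mathrm{ad}_{x_{j_0}}\cdots\mathrm{ad}_{x_{j_d}}(\phi)\cdot x^B$ with exactly $d+1$ inner $\mathrm{ad}_{x_j}$'s per term, so that for $d+1\ge n(q-1)+1$ every term vanishes, placing $\phi\in D_k^{n(q-1)}(R)$. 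The main obstacle is precisely this last Leibniz bookkeeping: one must verify carefully that every outer $\mathrm{ad}_{x^{\alpha_i}}$ contributes exactly one inner $\mathrm{ad}_{x_j}$, and that the ambient monomial multiplications commute past the whole $\mathrm{ad}$-chain without producing extra commutators, so that the pigeonhole bound really applies term by term.
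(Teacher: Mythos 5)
The paper does not actually prove this proposition: it defers to Yekutieli \cite{Yekutieli} and to the exposition in Smith--Van den Bergh \cite{Smith-vdB}. So there is nothing internal to compare against; what you have done is supply a self-contained proof where the paper gives a citation. Your argument is correct, and it is in essence the standard one found in those references: the freshman's-dream identity $\mathrm{ad}_f^q = \mathrm{ad}_{f^q}$ for the adjoint action of $R$ on $\mathrm{End}_k(R)$, applied once per direction.

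The ``Leibniz bookkeeping'' you flag as the main obstacle does go through, and the reason is worth stating cleanly. On $\mathrm{End}_k(R)$, the operators $\mathcal{L}_g\colon\psi\mapsto L_g\psi$ and $\mathcal{R}_g\colon\psi\mapsto\psi L_g$ (for $g\in R$) \emph{all} commute with one another, because $R$ is commutative; consequently $\mathrm{ad}_f=\mathcal{L}_f-\mathcal{R}_f$ commutes with every $\mathcal{L}_g$ and every $\mathcal{R}_g$. The Leibniz identity in operator form is $\mathrm{ad}_{fg}=\mathcal{L}_f\,\mathrm{ad}_g+\mathcal{R}_g\,\mathrm{ad}_f$, so expanding $\mathrm{ad}_{x^{\alpha}}$ iteratively yields a sum of terms $\mathcal{L}_{x^A}\mathcal{R}_{x^B}\,\mathrm{ad}_{x_j}$ with exactly one $\mathrm{ad}_{x_j}$ per term. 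In the nested product $\mathrm{ad}_{x^{\alpha_0}}\cdots\mathrm{ad}_{x^{\alpha_d}}(\phi)$ you can therefore, working from the inside out, commute all $\mathcal{L}$'s and $\mathcal{R}$'s past the remaining $\mathrm{ad}$'s; every term of the full expansion has the form $\mathcal{L}_{x^A}\mathcal{R}_{x^B}\bigl(\mathrm{ad}_{x_{j_0}}\cdots\mathrm{ad}_{x_{j_d}}(\phi)\bigr)$ with exactly $d+1$ inner single-variable $\mathrm{ad}$'s. With $d\ge n(q-1)$ the pigeonhole bound kills every term, so $\phi\in D_k^{n(q-1)}(R)$. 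One further small point you use implicitly and should state: $\phi\in D_k^d(R)$ is \emph{equivalent} to the vanishing of all $(d+1)$-fold nested commutators $\mathrm{ad}_{f_0}\cdots\mathrm{ad}_{f_d}(\phi)$; this follows from the inductive definition together with the fact that the centralizer of $R$ in $\mathrm{End}_k(R)$ is $R$ itself (any $\phi$ commuting with all multiplications satisfies $\phi(f)=f\phi(1)$). Finally, the easy inclusion as you give it needs only that $R^q=\{f^q:f\in R\}$ is the image of Frobenius, which is automatic; the hard inclusion needs only that $R$ is generated as a $k$-algebra by the finitely many $x_i$, which holds for Stanley--Reisner rings and is what makes the pigeonhole bound finite. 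With those two clarifications the argument is complete and correct.
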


Let us now give the summands appearing in \ref{Lemma:Fr-decomp} a more convenient notation, and define $M_{st(\sigma)}^q:=(R^q_{st(\sigma)})^{m_{st(\sigma)}(q)}$. It follows from \ref{Lemma:Fr-decomp} that 
\[
End_{R^q}(R)\simeq \bigoplus_{st(\sigma),st(\tau)\sse K}Hom_{R^q}(M_{st(\sigma)}^q,M_{st(\tau)}^q).
\]
As each $M_{st(\sigma)}^q$ is generated as an $R^q$-module by monomials of degree in each variable up to $q-1$, we can see that as an $R^{pq}$-module it is contained in $\bigoplus_{st(\alpha)\sse st(\sigma)}M_{st(\alpha)}^{pq}$, because the elements of $M_{st(\sigma)}^q$ contain monomials of degree larger than $q-1$, which have support on smaller stars (recall that as $q=p^r$, $pq=p^{r+1}$). In particular this implies the following:

\begin{Lemma}\label{Lemma:q-to-pq}
  $Hom_{R^q}(M_{st(\sigma)}^q,M_{st(\tau)}^q) \sse \bigoplus_{st(\alpha)\sse st(\sigma),st(\beta)\sse st(\tau)}Hom_{R^{pq}}(M_{st(\alpha)}^{pq},M_{st(\beta)}^{pq})$.
\end{Lemma}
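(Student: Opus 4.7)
The plan is to work inside Yekutieli's description $D_k(R) = \bigcup_q End_{R^q}(R)$. Since $R^{pq} \sse R^q$, restriction of scalars gives $End_{R^q}(R) \sse End_{R^{pq}}(R)$, and Lemma \ref{Lemma:Fr-decomp} splits both sides according to the Frobenius summands:
\[
End_{R^q}(R) = \bigoplus_{\sigma,\tau} Hom_{R^q}(M_{st(\sigma)}^q, M_{st(\tau)}^q),
\]
and similarly at level $pq$. So the task is to identify where a fixed $Hom$-summand at level $q$ lands in the finer decomposition at level $pq$.

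The main technical step will be to show the containment of $R^{pq}$-submodules
\[
M_{st(\sigma)}^q \sse \bigoplus_{st(\gamma) \sse st(\sigma)} M_{st(\gamma)}^{pq} \sse R.
\]
Recall that $M_{st(\sigma)}^q$ is the direct sum of cyclic $R^q$-modules $R^q \cdot x^\mu$ over monomials $x^\mu$ with $\mu_i \le q-1$ and $st(\supp(\mu)) = st(\sigma)$. Decomposing $R^q$ as an $R^{pq}$-module as $\bigoplus_{\lambda_i \le p-1} R^{pq} \cdot (x^q)^\lambda$, I get $R^q \cdot x^\mu = \bigoplus_\lambda R^{pq} \cdot x^{q\lambda + \mu}$, summed over those $\lambda$ with $\supp(\lambda) \cup \supp(\mu) \in K$. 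Each summand $R^{pq} \cdot x^{q\lambda + \mu}$ is isomorphic to $R^{pq}_{st(\gamma)}$ for $\gamma = \supp(\lambda) \cup \supp(\mu)$, hence lies inside $M_{st(\gamma)}^{pq}$. Since $\supp(\mu) \sse \gamma$, Lemma \ref{Lemma:stars}(i) gives $st(\gamma) \sse st(\supp(\mu)) = st(\sigma)$, proving the claim. This also exhibits the $R^{pq}$-decomposition of $R$ as a refinement of the $R^q$-decomposition: each indecomposable $R^{pq}$-summand $R^{pq} \cdot x^\nu$ lies in a unique $M^q_{st(\sigma')}$ via the Euclidean split $\nu = q\lambda + \mu$. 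As a direct consequence, $M_{st(\alpha)}^{pq} \cap M_{st(\sigma)}^q = 0$ whenever $st(\alpha) \not\sse st(\sigma)$.

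Given $\phi \in Hom_{R^q}(M_{st(\sigma)}^q, M_{st(\tau)}^q)$, extend by zero on the other $R^q$-summands of $R$ to an element of $End_{R^q}(R) \sse End_{R^{pq}}(R)$. The observation above forces $\phi$ to vanish on every $M_{st(\alpha)}^{pq}$ with $st(\alpha) \not\sse st(\sigma)$, since such a summand is entirely disjoint from $M_{st(\sigma)}^q$. Dually, the image of $\phi$ lies in $M_{st(\tau)}^q \sse \bigoplus_{st(\beta) \sse st(\tau)} M_{st(\beta)}^{pq}$, so its projection onto $M_{st(\beta)}^{pq}$ is zero for $st(\beta) \not\sse st(\tau)$. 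In the $R^{pq}$-decomposition $End_{R^{pq}}(R) = \bigoplus_{\alpha,\beta} Hom_{R^{pq}}(M_{st(\alpha)}^{pq}, M_{st(\beta)}^{pq})$, the element $\phi$ therefore has nonzero components only in the asserted range, yielding the claimed containment.

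The main obstacle will be the support bookkeeping in paragraph two: verifying that the $R^q$-decomposition and its $R^{pq}$-refinement interact via the correct star containment. Once one has the decomposition $R^q \cdot x^\mu = \bigoplus_\lambda R^{pq} \cdot x^{q\lambda + \mu}$ together with the monotonicity $\supp(\mu) \sse \gamma \Rightarrow st(\gamma) \sse st(\supp(\mu))$ from Lemma \ref{Lemma:stars}(i), the remaining argument is formal manipulation of direct sum structures.
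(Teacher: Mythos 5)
Your proposal is correct, and it follows essentially the same line of reasoning that the paper sketches in the paragraph preceding the lemma: each generator $R^q\cdot x^\mu$ of $M^q_{st(\sigma)}$ with $0\le\mu_i<q$ and $\supp(\mu)$ giving $st(\sigma)$ decomposes over $R^{pq}$ into cyclic pieces $R^{pq}\cdot x^{q\lambda+\mu}$ whose supports contain $\supp(\mu)$, hence land in summands $M^{pq}_{st(\gamma)}$ with $st(\gamma)\sse st(\sigma)$ by Lemma \ref{Lemma:stars}(i). You have simply made the paper's brief "higher-degree monomials have support on smaller stars" remark fully explicit by writing out the Euclidean split $\nu = q\lambda+\mu$ and the resulting refinement of the two Frobenius decompositions, which is a welcome level of rigor but not a different method.
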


This lets us think of elements $\phi\in End_{R^q}(R)$ as block matrices with each block having entries in some $R^q/I_{st(\sigma)}$; it is vital to remember that this means that the entries have degree equal to a multiple of $q$.

\begin{Defn}\label{Defn:J-sigma-tau}
 Let $J_q(st(\alpha), st(\beta))$ denote the ideal in $D(R)$ generated by the elements of $Hom_{R^q}(M_{st(\alpha)}^q,M_{st(\beta)}^q)$, and let $J(st(\alpha), st(\beta)) := \sum_q J_q(st(\alpha), st(\beta))$. For convenience we denote $J(st(\sigma),st(\sigma))$ by simply $J(st(\sigma))$. 
\end{Defn}

The following result is essentially the same as \ref{Thm:monomial-ideals} in a different guise.

\begin{Prop}\label{Prop:J-sigma-principal}
  Assume $st(\sigma)\ess st(\tau)$, and let $\phi\in Hom_{R^q}(M_{st(\sigma)}^q,M_{st(\tau)}^q)$ be a nonzero element. Then $\langle \phi\rangle$, the ideal in $D(R)$ generated by $\phi$, is equal to the ideal $J(st(\tau))$. Furthermore, we have that $J(st(\tau)\sse J(st(\sigma))$.
\end{Prop}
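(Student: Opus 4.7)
The plan is to show that both $\langle \phi \rangle$ and $J(st(\tau))$ coincide with the principal ideal $\langle x_\tau \rangle$; given this, the ``furthermore'' inclusion $J(st(\tau))\sse J(st(\sigma))$ will be immediate from Corollary~\ref{Cor:ideal-containment} applied to $st(\tau)\sse st(\sigma)$. The guiding principle is Theorem~\ref{Thm:monomial-ideals}, which tells us that every two-sided ideal of $D(R)$ is determined by which reduced monomials $x_\gamma$ it contains, so equality of ideals can be tested against these monomials.

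First I would establish the easy inclusion $\langle \phi \rangle \sse J(st(\tau))$: the projection $e_\tau\colon R\surj M^q_{st(\tau)}$ from Lemma~\ref{Lemma:Fr-decomp}, viewed as an element of $End_{R^q}(R)\sse D(R)$, sits in the $(\tau,\tau)$-block of the block-matrix decomposition and so is one of the generators of $J(st(\tau))$; the factorisation $\phi = e_\tau\circ\phi$ as operators on $R$ (valid because $\phi$ lands in $M^q_{st(\tau)}$, where $e_\tau$ acts as the identity) puts $\phi$ in $\langle e_\tau\rangle\sse J(st(\tau))$. Next, for $J(st(\tau))\sse \langle x_\tau\rangle$, the key observation is that any vertex $x_i\notin st(\tau)$ annihilates $M^q_{st(\tau)}$: every basis monomial $x^\gamma$ there has $st(\supp(\gamma))=st(\tau)$, so $\supp(\gamma)\cup\{x_i\}\notin K$ and hence $x_ix^\gamma=0$ in $R$. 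Consequently, for any $\psi\in End_{R^q}(M^q_{st(\tau)})$ we have $x_i\psi=0$ as operators; expanding $\psi=\sum c_{ab}x^a\dd^{(b)}$ in the Weyl basis and invoking linear independence of the nonzero basis monomials then forces $\supp(a)\cup\{x_i\}\notin K$ for every term with $c_{ab}\neq 0$, i.e.\ $st(\supp(a))\sse st(\tau)$. Theorem~\ref{Thm:monomial-ideals}(1)--(3) together with Corollary~\ref{Cor:ideal-containment} give $\langle x^a\dd^{(b)}\rangle=\langle x_{\supp(a)}\rangle\sse\langle x_\tau\rangle$, whence $\langle\psi\rangle\sse\langle x_\tau\rangle$, and summing over $\psi$ yields $J(st(\tau))\sse\langle x_\tau\rangle$.

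The remaining inclusion $\langle x_\tau\rangle\sse\langle \phi\rangle$ uses that $\phi$ is nonzero: choose a basis monomial $x^\alpha\in M^q_{st(\sigma)}$ with $\phi(x^\alpha)\neq 0$, so that $\phi(x^\alpha)\in M^q_{st(\tau)}$ contains (by definition of the summand $M^q_{st(\tau)}$) at least one basis monomial whose support-star is exactly $st(\tau)$. Using the multi-grading isolation of Theorem~\ref{Thm:monomial-ideals}(1), I would extract from $\phi$'s Weyl expansion a single term $x^a\dd^{(b)}\in\langle\phi\rangle$ responsible for this contribution, and then apply parts (2)--(3) to conclude $\langle x^a\dd^{(b)}\rangle=\langle x_{\supp(a)}\rangle\sse\langle\phi\rangle$. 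Combined with $st(\supp(a))\sse st(\tau)$ from the previous paragraph, the hard part will be to argue that $st(\supp(a))=st(\tau)$ exactly for this extracted term, so that Corollary~\ref{Cor:ideal-containment} gives $x_\tau\in\langle x_{\supp(a)}\rangle\sse\langle \phi\rangle$. Making this rigorous is the main obstacle and requires careful tracking of how $\dd^{(b)}$ shifts monomial supports under application to $x^\alpha$ and how this interacts with the star-based summand decomposition -- essentially confirming that the ``level'' $st(\tau)$ reached by $\phi$'s image cannot arise purely from terms of strictly smaller star-support.
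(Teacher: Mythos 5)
Your approach is genuinely different from the paper's: you aim to identify both $\langle\phi\rangle$ and $J(st(\tau))$ with the monomial ideal $\langle x_\tau\rangle$ by working in the Weyl basis and invoking Theorem~\ref{Thm:monomial-ideals}, whereas the paper stays entirely inside the block-matrix description of $End_{R^q}(R)$. (The identity $J(st(\tau))=\langle x_\tau\rangle$ is exactly Theorem~\ref{Thm:translation}, which the paper deduces \emph{from} this Proposition, so a direct argument here would give an independent route to it; your easy inclusion $\langle\phi\rangle\sse J(st(\tau))$ via $\phi=e_\tau\circ\phi$ is the same as the paper's.) Unfortunately both of the steps carrying the weight of your argument have gaps.

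For $J(st(\tau))\sse\langle x_\tau\rangle$ you argue from $x_i\psi=0$, for vertices $x_i\notin st(\tau)$, to the conclusion that $st(\supp(a))\sse st(\tau)$ for every Weyl term $c_{ab}x^a\dd^{(b)}$ of $\psi$. This does not follow: the vertex condition only shows that the \emph{vertices} of $st(\supp(a))$ are among those of $st(\tau)$, which is strictly weaker than inclusion of the stars, and the condition can even be vacuous. Take $K$ the boundary of the tetrahedron on $\{x_1,x_2,x_3,x_4\}$ and $\tau=\{x_1\}$: every vertex of $K$ lies in $st(\tau)$, so there is no $x_i\notin st(\tau)$ to multiply by and the argument yields no constraint, while $J(st(\tau))=\langle x_1\rangle$ is a proper ideal requiring a real argument. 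To salvage this step you would also have to use that $\psi$ vanishes on the Frobenius summands $M^q_{st(\gamma)}$ with $st(\gamma)\neq st(\tau)$, a constraint your proof never invokes. For the remaining inclusion $\langle x_\tau\rangle\sse\langle\phi\rangle$ you explicitly defer the essential point (finding an extracted term with $st(\supp(a))=st(\tau)$), so that part is acknowledged to be unfinished. Note also that the ``furthermore'' statement $J(st(\tau))\sse J(st(\sigma))$ has a one-line direct proof (as in the paper, $\phi=\phi\circ id^q_{st(\sigma)}$) and does not need the translation to $\langle x_\tau\rangle$ at all. The paper avoids all of these difficulties by passing to a sufficiently large $s>q$ so that $\phi$, viewed in $End_{R^s}(R)$, has constant entries in some blocks, and then using the block-idempotents $\mathbf{1}_{ii}$ and permutations within a block to isolate the identity maps $id^s_{st(\beta)}$, never touching the Weyl expansion.
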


\begin{proof}
Clearly, $J(st(\tau))$ is generated by the identity maps $id_{st(\tau)}^q:M_{st(\tau)}^q\to M_{st(\tau)}^q$ (for each $q$), so it suffices to show that these are in $\langle \phi \rangle$. 

Recall that any element of $End_{R^{q}}(R)$ has entries with degree a multiple of $q$. We claim that for $s>q$ a sufficiently large power of $p$, $\phi$ considered as an element of $End_{R^{s}}(R)$ will have at least some constant entries in each block $Hom_{R^s}(M_{st(\sigma)}^s,M_{st(\tau)}^s)$. To see this, suppose $\phi$ (as an element of $End_{R^{q}}(R)$) has an entry $x_i^q$ in a block $Hom_{R^q}(R^q\cdot x^a,R^q\cdot x^b)$ (with all $0\le a_j,b_j<q$), in other words $\phi(x^{a+cq})=x^{a+(c+1_i)q+b}$. It follows from \ref{Lemma:q-to-pq} that this block has image in $End_{R^{pq}}(R)$ contained in $\bigoplus_{0\le c,d<p} Hom_{R^{pq}}(R^{pq}\cdot x^{a+cq},R^{pq}\cdot x^{b+dq})$, and as $\phi(x^{a+cq}) = x^{a+(c+1_i)q+b} = x^{a+cq+(b+1_iq)}$ this yields the entry 1 in the blocks $Hom_{R^{pq}}(R^{pq}\cdot x^{a+cq},R^{pq}\cdot x^{b+1_iq})$. In similar fashion an entry with degree $nq$ will yield constant entries somewhere when considered as an $R^s$-linear map for $s>q$ a sufficiently large power of $p$. 

Now let $s$ be such a sufficiently large power of $p$, and consider $\phi$ as an element of $End_{R^{s}}(R)$; by \ref{Lemma:q-to-pq}, $Hom_{R^q}(M_{st(\sigma)}^q,M_{st(\tau)}^q)$ is contained in $\bigoplus_{st(\alpha)\sse st(\sigma),st(\beta)\sse st(\tau)}Hom_{R^{s}}(M_{st(\sigma)}^{s},M_{st(\tau)}^{s})$. We can see that $\phi$, considered as a matrix $(\phi_{ij})$ in $End_{R^{s}}(R)$, will have (among others) some constant entries in each block $End_{R^{s}}(M_{st(\beta)}^{s})$ such that $st(\beta)\sse st(\tau)$. Each of these entries can be ``picked out'' in the following manner: Let $\mathbf{1}_{ii}$ be the matrix in $End_{R^{s}}(R)$ with the appropriate identity map in position $(i,i)$ and zeroes otherwise. It is clear that $\mathbf{1}_{ii}\cdot\phi\cdot\mathbf{1}_{jj}$ is the matrix with entry $\phi_{ij}$ in position $(i,j)$ and zeroes otherwise; we may assume $\phi_{ij}=1$ as it is constant. Applying permutations of $End_{R^{s}}(M_{st(\beta)}^{s})$ (on both sides), we can now place this entry $1$ wherever we want within the matrix block corresponding to $End_{R^{s}}(M_{st(\beta)}^{s})$; taking sums of these we can produce any matrix with constant entries. In particular, we can make $id_{st(\beta)}^{s}$.

Thus, we have that each $id_{st(\beta)}^{s}$ such that $st(\beta)\sse st(\tau)$ is in $\langle\phi\rangle$, and in the same way any such $id_{st(\beta)}^{t}$ for $t>s$ any larger power of $p$. To recreate $id_{st(\tau)}^{t}$ for smaller powers $t<s$ we observe that those maps, considered as elements of $End_{R^{s}}(R)$, are in $\bigoplus_{st(\beta)\sse st(\tau)} End_{R^{s}}(M_{st(\beta)})$ and as such are contained in the ideal generated by the identity maps $id_{st(\beta)}^{s}$, in other words contained in $\langle\phi\rangle$. We have shown $J(st(\tau))\sse \langle\phi\rangle$; the opposite inclusion follows from the observation that $\phi = id_{st(\tau)}^q\circ \phi$, and so $\phi\in J(st(\tau))$.

The final claim is similar: $\phi = \phi\circ id_{st(\sigma)}^q$, and so $\phi\in J(st(\sigma))$.
\end{proof}

\begin{Prop}\label{Prop:J-sigma-endomorphism}
  The ideal $J(st(\sigma),st(\tau))$ is equal to $J(st(\sigma\cup\tau))$, if $\sigma\cup\tau$ is a face of $K$, and the zero ideal otherwise.
\end{Prop}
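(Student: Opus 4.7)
The plan splits into two cases according to whether $\sigma\cup\tau$ is a face of $K$. When $\sigma\cup\tau\notin K$, I would show each $\mathrm{Hom}_{R^q}(M_{st(\sigma)}^q,M_{st(\tau)}^q)=0$, so $J(st(\sigma),st(\tau))=0$. Working summand by summand, a nonzero $R^q$-linear $\phi\colon R^q_{st(\sigma)}\to R^q_{st(\tau)}$ is determined by $f=\phi(1)\neq 0$ in $R^q_{st(\tau)}$ with $I^{[q]}_{st(\sigma)}f=0$. Since $\sigma\cup\tau\notin K$ gives $\tau\in U_\sigma$, we have $x_\tau^q\in I^{[q]}_{st(\sigma)}$, so $x_\tau^qf=0$ in $R^q_{st(\tau)}$. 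The key observation is that every maximal face of $st(\tau)$ contains $\tau$ (if $\mu$ is maximal in $st(\tau)$, then $\mu\cup\tau\in st(\tau)$ forces $\tau\sseq\mu$), so $x_\tau$ lies in no minimal prime of $R_{st(\tau)}$ and is thus a non-zerodivisor there; transporting through Frobenius, $x_\tau^q$ is a non-zerodivisor on $R^q_{st(\tau)}$. Hence $f=0$, a contradiction.

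When $\sigma\cup\tau\in K$ the main work is the inclusion $J(st(\sigma),st(\tau))\sseq J(st(\sigma\cup\tau))$. I would first identify the relevant annihilator: a monomial $y_\beta\in R^q_{st(\tau)}$ lies in $\mathrm{Ann}_{R^q_{st(\tau)}}(I^{[q]}_{st(\sigma)})$ iff $\alpha\cup\beta\cup\tau\notin K$ for every $\alpha\in U_\sigma$; taking $\alpha=\beta\cup\tau$ (which belongs to $U_\sigma$ exactly when $\beta\cup\sigma\cup\tau\notin K$) shows the annihilator consists precisely of elements supported on $st(\sigma\cup\tau)$. Consequently, any nonzero $\phi\in\mathrm{Hom}_{R^q}(M_{st(\sigma)}^q,M_{st(\tau)}^q)$ factors as $\phi=h\circ g$, where $g\colon M_{st(\sigma)}^q\to M_{st(\sigma\cup\tau)}^q$ is the natural quotient (and $h$ is multiplication by the appropriate element). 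Since $st(\sigma)\ess st(\sigma\cup\tau)$ by Lemma \ref{Lemma:stars}(i), Proposition \ref{Prop:J-sigma-principal} applied to $g$ gives $\langle g\rangle=J(st(\sigma\cup\tau))$, so $\phi=h\cdot g\in J(st(\sigma\cup\tau))$. I expect this annihilator calculation and the resulting factorization through $M^q_{st(\sigma\cup\tau)}$ to be the technical heart of the proof.

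For the reverse inclusion $J(st(\sigma\cup\tau))\sseq J(st(\sigma),st(\tau))$ I would exhibit a single generator. Define $\phi_0\colon R^q_{st(\sigma)}\to R^q_{st(\tau)}$ by $\phi_0(\bar r)=rx_\sigma^q$; well-definedness follows from $x_\alpha^qx_\sigma^q=x_{\alpha\cup\sigma}^q\in I^{[q]}_{st(\tau)}$ for $\alpha\in U_\sigma$ (as $\alpha\cup\sigma\notin K$), and $\phi_0\neq 0$ because $\sigma\in st(\tau)$ by assumption. By Proposition \ref{Prop:Yekutieli} the $R^q$-linear maps $\iota_\sigma\colon M_{st(\sigma\cup\tau)}^q\to M_{st(\sigma)}^q$ given by multiplication by $x_\tau^q$, and $\pi_\tau\colon M_{st(\tau)}^q\surj M_{st(\sigma\cup\tau)}^q$ given by the natural quotient, lie in $D(R)$. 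A direct computation shows that $\pi_\tau\circ\phi_0\circ\iota_\sigma$ is multiplication by $x_{\sigma\cup\tau}^q$ on $M_{st(\sigma\cup\tau)}^q$, a nonzero endomorphism, so by Proposition \ref{Prop:J-sigma-principal} it generates $J(st(\sigma\cup\tau))$; hence $J(st(\sigma\cup\tau))\sseq\langle\phi_0\rangle\sseq J(st(\sigma),st(\tau))$, completing the argument.
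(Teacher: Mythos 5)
Your treatment of the non-face case and of the reverse inclusion $J(st(\sigma\cup\tau))\sseq J(st(\sigma),st(\tau))$ is correct; the non-zerodivisor argument is more elementary and more explicit than the paper's support-theoretic reasoning, and the triple $\pi_\tau\circ\phi_0\circ\iota_\sigma$ is a clean way to produce a generator.

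The gap is in the forward inclusion, in the annihilator claim and the ensuing factorization. Setting $\alpha=\beta\cup\tau$ gives only one implication: if $y_\beta\in\mathrm{Ann}_{R^q_{st(\tau)}}(I^{[q]}_{st(\sigma)})$ then $\beta\cup\sigma\cup\tau\in K$. The converse fails: let $K$ be the boundary of the $2$-simplex (maximal faces $\{1,2\},\{2,3\},\{1,3\}$), $\sigma=\{1\}$, $\tau=\{2\}$, $\beta=\{2\}$. Then $\beta\cup\sigma\cup\tau=\{1,2\}\in K$, but $I_{st(\sigma)}=(x_2x_3)$ and in $R^q_{st(\tau)}=k[x_1^q,x_2^q,x_3^q]/(x_1^qx_3^q)$ the annihilator of $x_2^qx_3^q$ is $(x_1^q)$, which does not contain $x_2^q$. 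The correct description is $\{y_\beta : st(\beta\cup\tau)\sseq st(\sigma)\}$, a strictly smaller set in general. More seriously, even granting your characterization, the factorization $\phi=h\circ g$ through $M^q_{st(\sigma\cup\tau)}$ does not follow: it needs $\phi(1)\cdot I^{[q]}_{st(\sigma\cup\tau)}=0$ in $R^q_{st(\tau)}$, and $\beta\in st(\sigma\cup\tau)$ is not sufficient for that. What actually makes the factorization work is the equality $\mathrm{Ann}_{R^q_{st(\tau)}}(I^{[q]}_{st(\sigma)})=\mathrm{Ann}_{R^q_{st(\tau)}}(I^{[q]}_{st(\sigma\cup\tau)})$, which you still need to establish: since $R_{st(\tau)}$ is reduced, each annihilator is the intersection of the minimal primes of $R_{st(\tau)}$ (indexed by maximal faces $\mu\ess\tau$) not containing the given ideal, and for such $\mu$ the conditions $\sigma\not\sseq\mu$ and $\sigma\cup\tau\not\sseq\mu$ coincide. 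With that lemma supplied, your argument goes through and is arguably tidier than the paper's, which instead bounds where constant matrix entries can appear in the direct limit $\bigcup_s\mathrm{End}_{R^s}(R)$.
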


\begin{proof}
The module $Hom_{R^q}(M_{st(\sigma)}^q,M_{st(\tau)}^q)$ has support $st(\sigma)^\circ\cap st(\tau)^\circ$. From \ref{Lemma:stars}$(vi)$ it follows that this is $st(\sigma\cup\tau)^\circ$, if $\sigma\cup\tau\in K$. 

If $\sigma\cup\tau$ is a non-face, $st(\sigma)\cap st(\tau)$ does not contain any maximal simplices, and so the cone on $st(\sigma)\cap st(\tau)$ is not a union of irreducible components of $Spec(R)$, and so is not the closure of the support of any element in $Hom_{R^q}(M_{st(\sigma)}^q,M_{st(\tau)}^q)$, so this must be the zero module. It follows that $J(st(\sigma),st(\tau))$ is the zero ideal.

For the case when $\sigma\cup\tau$ is a face of $K$, recall that by Lemma \ref{Lemma:q-to-pq}, 
\[
Hom_{R^q}(M_{st(\sigma)}^q,M_{st(\tau)}^q)\sse \bigoplus_{st(\alpha)\sse st(\sigma),st(\beta)\sse st(\tau)}Hom_{R^{pq}}(M_{st(\alpha)}^{pq},M_{st(\beta)}^{pq}).
\]
In particular, there will be entries in the block $Hom_{R^{pq}}(M_{st(\sigma\cup\tau)}^{pq},M_{st(\sigma\cup\tau)}^{pq})$, so by \ref{Prop:J-sigma-principal} we have that $J(st(\sigma\cup\tau))\sse J(st(\sigma),st(\tau))$.

For the converse, note that as an $R^q$-module, 
\[
Hom_{R^q}(M_{st(\sigma)}^q,M_{st(\tau)}^q)\simeq \big((I_{st(\tau)}^q:I_{st(\sigma)}^q)/I_{st(\tau)}^q\big)^{m_{st(\sigma)}(q) \times m_{st(\tau)}(q)}
\]
(where $I^q$ is the restriction of $I\sse R$ to $R^q$). Any element of $Hom_{R^q}(M_{st(\alpha)}^q,M_{st(\beta)}^q)$ has, as a matrix, entries with degree (in each variable) a multiple of $q$, with constant (nonzero) entries only when $st(\beta)\sse st(\alpha)$, as then $(I_{st(\beta)}^q:I_{st(\alpha)}^q)$ is the unit ideal in $R^q$ (otherwise it is generated by elements of degree $\ge q$). It follows that elements of the image of $Hom_{R^q}(M_{st(\sigma)}^q,M_{st(\tau)}^q)$ in $End_{R^s}(R)$ for $s>q$ (considered as matrices) have entries with degree some multiple of $s$, with constant (nonzero) entries only in those blocks $Hom_{R^s}(M_{st(\alpha)}^s,M_{st(\beta)}^s)$ with $st(\beta)\sse st(\alpha)$. In the direct limit, these elements become infinite matrices with entries in $k$, in other words there can only be nonzero entries in those blocks corresponding to $st(\beta)\sse st(\alpha)$ (any nonzero entry in a different block must have infinite degree, which is impossible). This implies that $J(st(\sigma),st(\tau))$ is contained in $\sum_{st(\sigma)\ess st(\alpha) \ess st(\beta)\sse st(\tau)}J(st(\alpha),st(\beta))$, which by \ref{Prop:J-sigma-principal} is equal to $\sum_{st(\sigma)\ess st(\beta)\sse st(\tau)}J(st(\beta))=J(st(\sigma\cup\tau))$ and we are done.
\end{proof}

\begin{Thm}\label{Thm:char-p-lattice}
  The ideals $J(st(\sigma))$ generate the lattice of ideals in $D(R)$ by sums and intersections.
\end{Thm}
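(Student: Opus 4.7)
The plan is to decompose any two-sided ideal $I$ of $D(R)$ into block components via the Frobenius structure, and show that each nonzero block sits inside some $J(st(\gamma))$ and in fact generates it. By Proposition \ref{Prop:Yekutieli} and Lemma \ref{Lemma:Fr-decomp}, $D(R) = \bigcup_q End_{R^q}(R)$ with each $End_{R^q}(R) = \bigoplus_{\sigma,\tau} Hom_{R^q}(M_{st(\sigma)}^q, M_{st(\tau)}^q)$, and the block-projection operators $id_{st(\sigma)}^q$ are themselves elements of $D(R)$. For any $\phi \in I \cap End_{R^q}(R)$, the block components $\phi_{\sigma,\tau} := id_{st(\tau)}^q \cdot \phi \cdot id_{st(\sigma)}^q$ then lie in $I$ by two-sidedness.

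Each nonzero $\phi_{\sigma,\tau}$ lies in $J(st(\sigma \cup \tau))$ by Proposition \ref{Prop:J-sigma-endomorphism}. The key technical step is to \emph{generalise} Proposition \ref{Prop:J-sigma-principal} and show that any such nonzero $\phi_{\sigma,\tau}$ in fact generates all of $J(st(\sigma \cup \tau))$, with no hypothesis on the relative position of $st(\sigma)$ and $st(\tau)$. Granting this, $J(st(\sigma \cup \tau)) \sse I$ whenever $I$ has a nonzero $(\sigma,\tau)$-block for some $q$, and $I$ is recovered as the sum
\[
I = \sum_{(\sigma,\tau)} J(st(\sigma \cup \tau))
\]
over all such pairs, exhibiting every two-sided ideal as a sum of ideals of the form $J(st(\gamma))$. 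In particular the $J(st(\sigma))$'s generate the lattice of ideals in $D(R)$ under sums alone, hence also under sums and intersections.

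The main obstacle is verifying the above extension of Proposition \ref{Prop:J-sigma-principal}. The matrix-isolation argument used in its proof applies without essential change: viewing $\phi_{\sigma,\tau}$ as an element of $End_{R^s}(R)$ for $s$ a sufficiently large power of $p$, its positive-degree entries yield constant entries in diagonal blocks $End_{R^s}(M_{st(\beta)}^s)$, and by Lemma \ref{Lemma:q-to-pq} such $\beta$ must satisfy both $st(\beta) \sse st(\sigma)$ and $st(\beta) \sse st(\tau)$. By Lemma \ref{Lemma:stars}(iv) this forces $st(\beta) \sse st(\sigma \cup \tau)$; picking $\beta$ with $st(\beta) = st(\sigma \cup \tau)$ and running the isolation and permutation step exactly as in Proposition \ref{Prop:J-sigma-principal} produces $id_{st(\sigma \cup \tau)}^s \in \langle \phi_{\sigma,\tau} \rangle$, so $J(st(\sigma \cup \tau)) \sse \langle \phi_{\sigma,\tau} \rangle$. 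Combined with the reverse containment already noted from Proposition \ref{Prop:J-sigma-endomorphism}, this gives $\langle \phi_{\sigma,\tau} \rangle = J(st(\sigma \cup \tau))$ as required.
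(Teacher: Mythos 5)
Your approach is the same as the paper's in substance --- decompose $\phi \in I$ into its Frobenius block components via the idempotent projectors, note that two-sidedness keeps the components in $I$, and show that each nonzero component generates the whole ideal $J(st(\sigma\cup\tau))$ --- but you reorganize the argument in a cleaner way. The paper factors through Proposition~\ref{Prop:J-sigma-endomorphism}, which shows that the ideal generated by the \emph{entire} module $Hom_{R^q}(M_{st(\sigma)}^q, M_{st(\tau)}^q)$ equals $J(st(\sigma\cup\tau))$, and then in the theorem's proof invokes both \ref{Prop:J-sigma-principal} and \ref{Prop:J-sigma-endomorphism} somewhat tersely to pass from a single $\langle\phi\rangle$ to $J(st(\sigma\cup\tau))$. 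You instead propose directly strengthening Proposition~\ref{Prop:J-sigma-principal} to drop the hypothesis $st(\sigma)\supset st(\tau)$: a \emph{single} nonzero element of $Hom_{R^q}(M_{st(\sigma)}^q, M_{st(\tau)}^q)$ already generates $J(st(\sigma\cup\tau))$. That is the right technical statement, and stating it explicitly tightens the logic that the paper leaves somewhat implicit. Your verification that $st(\beta)\sse st(\sigma)$ and $st(\beta)\sse st(\tau)$ together force $st(\beta)\sse st(\sigma\cup\tau)$ via Lemma~\ref{Lemma:stars}(iv) is correct and is exactly what's needed. One small imprecision you inherit from the paper's Proposition~\ref{Prop:J-sigma-principal}: the constant entries produced by iterating Frobenius need not land in \emph{diagonal} blocks $End_{R^s}(M_{st(\beta)}^s)$; in general they land in blocks $Hom_{R^s}(M_{st(\alpha)}^s, M_{st(\beta)}^s)$ with $st(\beta)\sse st(\alpha)$, and one then needs a further compose-with-projection step to produce the diagonal idempotent. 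Since the paper's own argument elides this in the same way, it is not a new gap on your part, but a complete write-up should address it. Your closing observation that sums alone suffice (intersections are redundant for showing generation) is also present, implicitly, in the paper's first paragraph.
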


\begin{proof}
Let $I$ be an ideal in $D(R)$; it is of course true in general that $I=\sum_{\phi\in I} \langle \phi\rangle$. By \ref{Prop:J-sigma-principal} and \ref{Prop:J-sigma-endomorphism} this is equal to $\sum J(st(\sigma))$, where the sum goes over all $\sigma\in K$ such that $I$ contains elements from some $Hom_{R^q}(M_{st(\alpha)}^q,M_{st(\sigma)}^q)$. 

Finally, the intersection $J(st(\sigma))\cap J(st(\tau))$ contains elements in those $End_{R^q}(M_{st(\alpha)}^q)$ with $st(\alpha)\sse st(\sigma)\cap st(\tau)$; the maximal such star is $st(\sigma\cup\tau)$ if $\sigma\cup\tau$ is a face of $K$, and if $\sigma\cup\tau$ is not a face, there are no such $\alpha$; in other words $J(st(\sigma))\cap J(st(\tau)) = J(st(\sigma\cup\tau))$.
\end{proof}

We have now given two essentially different descriptions of the ideals of $D(R)$, and we may wonder how to translate between the two languages. This is not too hard, as the obvious suggestion turns out to be true.

\begin{Thm}\label{Thm:translation}
  The ideal $J(st(\sigma))$ is equal to the ideal $\langle x_\sigma\rangle$.
\end{Thm}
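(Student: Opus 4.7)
My plan is to prove both inclusions of the equality directly. For $\langle x_\sigma\rangle\sseq J(st(\sigma))$, I would show $x_\sigma\in J(st(\sigma))$ by viewing $x_\sigma$ as the multiplication operator in $End_{R^q}(R)$ and decomposing it in the block structure from Lemma \ref{Lemma:Fr-decomp}. Multiplication by $x_\sigma$ sends the summand $R^q\cdot x^\alpha$ to $x^{\alpha+\sigma}$; after absorbing the $q$-shifts that occur when some $\alpha_i=q-1$ and $\sigma_i=1$, the image lies in some summand $R^q\cdot x^{\alpha'}$ with an $R^q$-valued coefficient. The crucial observation is that $\supp(\alpha)\cup\supp(\alpha')=\supp(\alpha)\cup\sigma$, since the $q$-shifts can only remove from $\supp(\alpha)\cup\sigma$ vertices that already lie in $\supp(\alpha)$. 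Each nonzero block of $x_\sigma$ thus lies in $Hom_{R^q}(M^q_{st(\supp(\alpha))},M^q_{st(\supp(\alpha'))})$, which by Proposition \ref{Prop:J-sigma-endomorphism} is contained in $J(st(\supp(\alpha)\cup\sigma))$, and since $\supp(\alpha)\cup\sigma\esseq\sigma$, this sits inside $J(st(\sigma))$ by the monotonicity statement in Proposition \ref{Prop:J-sigma-principal}. Summing over all blocks yields $x_\sigma\in J(st(\sigma))$.

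For $J(st(\sigma))\sseq\langle x_\sigma\rangle$, assuming first that $st(\sigma)\neq K$, let $\pi_0\in End_{R^q}(R)\sseq D(R)$ denote the projection onto the summand $R^q\cdot 1=R^q$ in the decomposition of Lemma \ref{Lemma:Fr-decomp}; it lies in $D(R)$ by Proposition \ref{Prop:Yekutieli}. Set $\phi=x_\sigma\cdot\pi_0\in\langle x_\sigma\rangle$. Then $\phi$ is nonzero (as $\phi(1)=x_\sigma$), annihilates every summand except $R^q\sseq M^q_K$, and has image in $R^q\cdot x^\sigma\sseq M^q_{st(\sigma)}$; hence $\phi$ is a nonzero element of $Hom_{R^q}(M^q_K,M^q_{st(\sigma)})$. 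Since $K\ess st(\sigma)$, Proposition \ref{Prop:J-sigma-principal} gives $\langle\phi\rangle=J(st(\sigma))$, establishing the inclusion. In the edge case $st(\sigma)=K$, both sides equal $D(R)$: the left-hand side by Theorem \ref{Thm:generators}, and the right-hand side because $1_R=\sum_\tau\pi_\tau$, where each projection $\pi_\tau$ onto $M^q_{st(\tau)}$ lies in $J(st(\tau))\sseq J(K)$ by the same monotonicity.

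The main technical obstacle is the careful bookkeeping of supports after the $q$-shifts in the first inclusion; once the identity $\supp(\alpha)\cup\supp(\alpha')=\supp(\alpha)\cup\sigma$ is secured, both containments reduce to direct applications of the preceding propositions.
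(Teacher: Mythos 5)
Your proof is correct and follows the same overall strategy as the paper: show $x_\sigma\in J(st(\sigma))$ by tracking how multiplication by $x_\sigma$ acts on the $R^q$-module decomposition of Lemma \ref{Lemma:Fr-decomp}, then use Proposition \ref{Prop:J-sigma-principal} for the reverse inclusion. Where you differ is in the second inclusion: the paper applies Proposition \ref{Prop:J-sigma-principal} to $x_\sigma$ directly, but $x_\sigma\in End_{R^q}(R)$ is spread across many blocks, so that proposition (stated for a single-block element of $Hom_{R^q}(M^q_{st(\sigma)},M^q_{st(\tau)})$) does not strictly apply without an additional step. Your device of pre-composing with the projection $\pi_0$ onto $R^q\cdot 1$, yielding $\phi=x_\sigma\cdot\pi_0\in Hom_{R^q}(M^q_K,M^q_{st(\sigma)})$, isolates a genuine single-block element lying in $\langle x_\sigma\rangle$ to which Proposition \ref{Prop:J-sigma-principal} applies cleanly; this is a genuine tightening of the paper's argument. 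Your support bookkeeping $\supp(\alpha)\cup\supp(\alpha')=\supp(\alpha)\cup\sigma$ for the first inclusion is also more careful than the paper's appeal to "support contained in $st(\sigma)$." One small slip: in your edge case you have the sides reversed --- Theorem \ref{Thm:generators} (via its proof) gives $\langle x_\sigma\rangle=D(R)$, which is the right-hand side, while the decomposition of the identity into projections $\pi_\tau$ gives $J(K)=D(R)$, which is the left-hand side --- but the substance of the argument is fine.
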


\begin{proof}
  It follows from \ref{Prop:J-sigma-principal} and \ref{Prop:J-sigma-endomorphism} that $J(st(\sigma)) = \bigoplus_{q>0,st(\beta)\sse st(\sigma)}Hom_{R^q}(M^q_{st(\alpha)},M_{st(\beta)}^q)$, in other words all the endomorphisms with support contained in $st(\sigma)$. We can think of $x_\sigma$ as an endomorphism of $R$, given by $f\mapsto f x_\sigma$, and considering that whatever element $f$ we choose, $f x_\sigma$ has support contained in $st(\sigma)$. This means that the endomorphism $x_\sigma$ is in $J(st(\sigma))$ and not in any larger ideal, and as $x_\sigma(1)=x_\sigma$ has support equal to $st(\sigma)^\circ$, it is not in any smaller ideal $J(st(\tau))$ with $st(\tau)\sse st(\sigma)$. From \ref{Prop:J-sigma-principal} it follows that $x_\sigma$ generates all of $J(st(\sigma))$ and the two ideals are equal.
\end{proof}

\section*{Acknowledgements}

I would like to thank my advisor Rikard Bøgvad for all the usual reasons, and I also thank Anders Björner for some helpful remarks.


\providecommand{\bysame}{\leavevmode\hbox to3em{\hrulefill}\thinspace}
\providecommand{\MR}{\relax\ifhmode\unskip\space\fi MR }
\providecommand{\MRhref}[2]{%
  \href{http://www.ams.org/mathscinet-getitem?mr=#1}{#2}
}
\providecommand{\href}[2]{#2}

\end{document}